\title{Periodic attractors of perturbed one dimensional maps}
\author{O. Kozlovski}
\newtheorem{mytheorem}{Theorem}
\newtheorem*{theorema}{Theorem}
\newtheorem{theorem}{Theorem}[section]
\newtheorem{proposition}[theorem]{Proposition}
\newtheorem{lemma}[theorem]{Lemma} %[section]
\newtheorem{conjecture}{Conjecture}
\theoremstyle{definition}
\newenvironment{theorem-proof}[1]
{\underline{Proof of Theorem~\ref{#1}} \newline}
{\hfill {$\Box$}}
\newfont{\Bb}{msbm10}
\newcommand{\N}{\mbox{\Bb N}}
\newcommand{\R}{\mbox{\Bb R}}
\newcommand{\interior}{\mathop{\mbox{int}}}
\newcommand{\Or}{\cal O}
\newcommand{\Fc}{{\cal F}}
\newcommand{\Nc}{{\cal N}}
\newcommand{\Sc}{{\cal S}}
\begin{document}

\maketitle

\begin{abstract}
  In this paper we investigate how many periodic attractors maps in a
  small neighbourhood of a given map can have. For this purpose we
  develop new tools which help to make uniform cross-ratio distortion
  estimates in a neighbourhood of a map with degenerate critical
  points.
\end{abstract}

\section{Introduction}
\label{sec:introduction}

Let $\Nc$ denote an interval or a circle and let $f: \Nc \to \Nc$ be a
$C^\infty$ map.  In this paper we use the standard notions of a
periodic point of $f$, its period, its immediate basin of attraction,
etc... One can find all relevant definitions in \cite{Melo1995}.

The map $f$ can have infinitely many periodic attractors, however this
is a non generic situation: if all critical points of $f$ are
non-flat, the periods of the periodic attracting orbits are bounded
from above, therefore if $f$ has infinitely many periodic attracting
orbits, they should accumulate on neutral periodic orbits and the
periods of these neutral orbits are also bounded, see \cite{M-dM-S} or
\cite{Melo1995}, Theorem B, p. 268. If $f$ has a flat critical point,
the periods of periodic attractors are not necessary bounded, an
example of such a map is given in \cite{Kaloshin2011}. As usual, we
call a critical point $c$ \emph{non-flat} if in a neighbourhood of $c$
the function $f(x)$ can be written as $\pm (\phi(x))^d$ where $\phi$
is a diffeomorphism and $d\in \N$, $d\ge 2$. For $C^\infty$ maps it is
equivalent to $D^df(c) \neq 0$ for some $d\ge 2$.

In this paper we will study whether a small perturbation of $f$ can
have infinitely many periodic attractors and related questions. The
simple answer to this problem is ``yes'', one can construct an example
of a $C^\infty$ map $f$ with a quadratic critical point which has a
finite number of periodic attractors such that in any $C^\infty$
neighbourhood of $f$ there are maps which have infinitely many
periodic attracting points and the periods of these points can be
arbitrarily large, see \cite{K12}. The source of these attractors is a
parabolic fixed point, and our first theorem shows that if $f$ does not
have neutral periodic orbits and all critical points of $f$ are
quadratic, then this phenomenon of having unbounded number of
attractors for maps in an arbitrarily small neighbourhood of $f$ is not
possible.

\begin{mytheorem}\label{thr:A}
  Let $f:\Nc \to \Nc$ be a $C^3$ map with quadratic critical
  points. Suppose that $f$ does not have neutral periodic orbits.

  Then there exists a neighbourhood $\Fc \subset C^3(\Nc)$ of $f$ and a
  natural number $n_0$ such that if $g\in \Fc$ and $\Or$ is an
  attracting periodic orbit of $g$, then either the period of the
  orbit $\Or$ is less than $n_0$ or there exists a critical point $c$
  of $g$ whose iterates converge to $\Or$ under iterations of the map
  $g$.

  In particular, all maps in $\Fc$ have finitely many periodic
  attractors and the number of these attractors is bounded by the
  number of attractors of $f$ plus the number of critical points of
  $f$.
\end{mytheorem}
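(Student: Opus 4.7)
The strategy splits the periodic attractors of $g$ by period into a low-period regime treated by hyperbolic persistence and a high-period regime treated by a Singer--Koebe argument powered by the paper's uniform cross-ratio tools. For the low-period regime: the hypotheses on $f$ together with Theorem B of \cite{Melo1995} imply that $f$ has only finitely many attracting periodic orbits, all of period at most some $P$. Fix $n_0 > P$, its final size to be dictated by the estimates below. All periodic orbits of $f$ of period $\le n_0$ are hyperbolic, so by the implicit function theorem they have unique $C^3$-continuations for $g$ in a sufficiently small $C^3$ neighbourhood $\Fc$ of $f$, preserving stability type. Shrinking $\Fc$ further: because $f''(c)\neq 0$ at every critical point $c$ of $f$, the implicit function theorem applied to $g'$ yields exactly one (still quadratic) critical point $c_g$ of $g$ near each such $c$, depending continuously on $g$. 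The classical Singer-type theorem for $C^3$ maps with non-flat critical points then says each attractor of $f$ attracts one of its critical points, so by continuity every persistent attractor of $g$ of period $\le n_0$ attracts the corresponding continued critical point of $g$.

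Now suppose $g \in \Fc$ has an attracting periodic orbit $\Or$ of period $p > n_0$, and assume for contradiction that no critical point of $g$ is attracted to $\Or$. Let $I$ be the component of the immediate basin containing a point $x_0 \in \Or$. Then $g^p|_I$ is a monotone diffeomorphism with attracting fixed point $x_0$, and each endpoint of $I$ is either on $\partial\Nc$ or is a non-attracting periodic point of $g$ of period at most $2p$ (if it lay in the open basin it would be interior to $I$). Since by assumption the chain $I, g(I), \dots, g^{p-1}(I)$ avoids every critical point of $g$, I would apply the paper's uniform cross-ratio distortion estimates to this chain, obtaining Koebe-type geometric control on $g^p$ over $I$ and over a definite enlargement of $I$, uniformly in $p$ and in $g \in \Fc$. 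The standard Singer--Koebe consequence of such bounded distortion --- that a monotone interval diffeomorphism with an attracting interior fixed point together with non-attracting periodic endpoints (or boundary of $\Nc$) is incompatible with uniformly bounded cross-ratio distortion once the period is large --- then forces a contradiction, provided $n_0$ was chosen past the threshold dictated by the cross-ratio constants.

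Combining the two regimes: each attractor of $g$ of period $\le n_0$ is the continuation of a distinct attractor of $f$, contributing at most (number of attractors of $f$) orbits, and each attractor of $g$ of period $> n_0$ captures a distinct critical point of $g$ in its immediate basin, contributing at most (number of critical points of $f$) orbits; hence the claimed bound on the total. The principal obstacle is the second step: making the cross-ratio distortion estimate \emph{uniform} for all $g$ in a $C^3$ neighbourhood of $f$, and in particular handling iterate chains that pass arbitrarily close to the quadratic critical points of $g$, with constants independent of $g \in \Fc$. Supplying such uniform estimates near degenerate critical points is exactly the technical innovation the abstract advertises, and without it the classical single-map Singer/Koebe arguments do not directly deliver a neighbourhood statement.
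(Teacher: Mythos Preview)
Your high-period step has a genuine gap. On a diffeomorphic chain that stays away from the critical points---as your chain $I,g(I),\ldots,g^{p-1}(I)$ does by hypothesis---the cross-ratio estimates of this paper yield at best $B(g^p,T^*,J^*)>1-\epsilon$; the only way Proposition~\ref{co-excep} produces $B>1+\tau$ is when an interval of the chain passes close to a critical point or a critical interval. Feeding $B>1-\epsilon$ into the Minimum Principle, with the endpoints $a,b$ of the immediate basin satisfying $|Dg^p(a)|,|Dg^p(b)|\ge 1$ (they are non-attracting periodic points), gives only $|Dg^p(x_0)|>(1-\epsilon)^3$, which is perfectly compatible with $x_0$ being attracting. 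No contradiction arises, and making $p$ large does not help since the bound stays below $1$. A Singer-type conclusion needs $B>1$ strictly (equivalently, negative Schwarzian of the iterate) on the basin, and the ``standard Singer--Koebe consequence'' you invoke does not exist in the form you need.

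This is exactly why the paper's proof is organised differently. It does not split by period but by whether the orbit of the attractor enters a fixed small neighbourhood $W$ of the critical points. If it does, Theorem~\ref{thr:sfn}---which is specific to \emph{quadratic} critical points and is where that hypothesis is actually used---guarantees that the relevant iterate of $g$ has strictly negative Schwarzian on the immediate basin, so Singer's argument applies verbatim and places a critical point of $g$ in the basin. If the orbit avoids $W$, Ma\~n\'e's theorem identifies the set of such orbits for $f$ as a hyperbolic repeller together with the basins of $f$'s finitely many attractors; this structure persists under perturbation, so any attractor of $g$ whose orbit avoids $W$ is the continuation of an attractor of $f$ and in particular has bounded period. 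The cross-ratio machinery of Sections~\ref{sec:cross-ratio-estim} and~\ref{sec:unif-pullb-estim} is developed for Theorem~\ref{thr:B} and the degenerate-critical-point case; it plays no role in the proof of Theorem~\ref{thr:A}.
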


In \cite{Melo1995}, Theorem B', p. 268, a stronger statement is
stated: though the conclusion in the statement is similar to
Theorem~\ref{thr:A}, it is not required that $f$ has no neutral
periodic points and it is not required that all critical points of $f$
are quadratic. As the example above shows this statement is not
correct and the authors of \cite{Melo1995} issued an erratum shortly
after the book was published.

So, we see that there are situations when small perturbations of $f$
can create unbounded number of periodic attractors. If $f$ has
quadratic critical points, it is possible to prove that this is not
typical. More precisely, the following is proven in \cite{K12}: let
$\Sc$ be a space of $C^d$, $d\ge 3$, maps of $\Nc$ with all critical
points quadratic and exclude diffeomorphisms of the circle from $\Sc$;
then for a generic smooth family $f_\lambda$ of maps in $\Sc$ there
exists $M>0$ such that the number of periodic attracting orbits of any
map in this family $f_\lambda$ is bounded by $M$. Interestingly enough
for a generic non trivial smooth family of circle diffeomorphisms such
a bound does not exists, i.e. there are maps in a generic family with
arbitrarily large number of periodic attracting orbits.

The situation gets significantly more complicated if we allow
degenerate (but non-flat) critical points. By a degenerate non-flat
critical point we mean a point $c$ of $f$ such that $Df^k(c)=0$ for
$k=1,2,\ldots,m-1$ and $Df^m(c) \neq 0$, where $m \ge 3$. 

Let us construct an example showing that Theorem~\ref{thr:A}
does not hold if we allow degenerate critical points of the map
$f$. Let $f \in C^\omega$ be a map of a circle topologically equivalent to the
doubling map $x \mapsto 2x \mod(1)$. Moreover, suppose that $f$ has
one critical point $c$ of cubic type and the orbit of $c$ is dense. 
Then there are maps arbitrarily close to $f$ in $C^\omega$ topology
such that they still have a cubic critical point and its iterates are
attracted to a periodic attracting orbit of high period. We can
perturb these maps so that the obtained maps do not have critical
points at all, but still have periodic attractors. Thus, for any $n_0$
we can find a map $g$ arbitrarily close to $f$ which has a periodic
attracting orbit of period larger than $n_0$ and no critical
points. This obviously contradicts the first part of the conclusion of
Theorem~\ref{thr:A}.

One might think that if a map has critical points of even degree,
then examples like above are impossible because critical points of
even degree cannot be destroyed by a small perturbation. Let us sketch
an example showing that this is not the case. Let $f \in C^\omega$ be
a unimodal map with a critical point $c$ of degree 4 such that
$a=f^{n_0}(c)$ is a repelling periodic point for some $n_0$ (so the map $f$ is
Misiurewicz). There exist an interval $J_0$ containing the critical
point $c$, a sequence of intervals $J_k$,
$k=1,2,\ldots,$ such that $J_k \to a$ as $k \to \infty$, and a sequence
$n_k$ such that $f^{n_k}(J_k)=J_0$ and $f^{n_k}|_{J_k}$ is a
diffeomorphism for all $k$. Moreover, under small perturbations of $f$
the repelling periodic point $a$ and the intervals $J_k$ persist,
i.e. if $g$ close enough to $f$, there exist a repelling periodic
point $a_g$ of $g$ close to $a$ and of the same period, and intervals
$J_{g,k}$ such that $\lim_{k \to \infty}J_{g,k}= a_g$,
$f^{n_k}(J_{g,k})=J_0$ and $f^{n_k}|_{J_{g,k}}$ is a diffeomorphism.
Using these intervals $J_k$ we can construct a sequence $g_{1,k}$ of
perturbations of $f$ in such a way that every map $g_{1,k}$ has two
critical points $c^2_{1,k}$ and $c^3_{1,k}$ of degrees 2 and 3 such
that the quadratic critical point $c^2_{1,k}$ is still mapped to
$a_{g_{1,k}}$ by $g^{n_0}_{1,k}$ and the cubic critical point becomes
a superattractor so that $g^{n_0}_{1,k}(c^3_{1,k})\in J_{g_{1,k},k}$ and
$g^{n_0+n_k}_{1,k}(c^3_{1,k})=c^3_{1,k}$. In the same way we can
perturb each of $g_{1,k}$ and obtain maps $g_{2,k}$ which still have
two critical points of degree 2 and 3, their cubic critical point are
still superattractors of period $n_0+n_k$ and the quadratic critical
points become  superattractors as well. Finally we can brake cubic
critical points of maps $g_{2,k}$ and obtain a sequence of maps
$g_{3,k}$ which satisfies the following properties: $\lim_{k\to \infty}
g_{3,k}=f$, $g_{3,k}$ are unimodal maps with quadratic critical points,
every map $g_{3,k}$ has two periodic attractors and periods of these
attractors tend to infinity as $k \to \infty$. Again, this contradicts
the conclusion of Theorem~\ref{thr:A}.

These examples show that a degenerate critical point of $f$ can
disappear under a perturbation or loose its degree, but the perturbed
map $g$ can have a periodic attractor related to this disappeared
critical point. We conjecture that the second part of
Theorem~\ref{thr:A} holds in this case:

\begin{conjecture}
  Let $f:\Nc \to \Nc$ be a $C^3$ map with non-flat critical points.
  Suppose that $f$ does not have neutral periodic orbits.

  Then there exists a neighbourhood $\Fc \subset C^3(\Nc)$ of $f$ such
  that for any $g\in \Fc$ the number of periodic attractors of $g$ is
  bounded by the number of attractors of $f$ plus the number of
  critical points of $f$ counted with their multiplicities.
\end{conjecture}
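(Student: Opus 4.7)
The plan is to follow the strategy of Theorem~\ref{thr:A} while upgrading two ingredients: the cross-ratio distortion control (which in the quadratic case is well established near a non-degenerate critical point but becomes delicate when the critical point of $f$ is degenerate) and the multiplicity bookkeeping that tracks how a critical point of degree $d$ can split into lower-degree critical points under perturbation. The target bound is precisely the sum of the multiplicities $\sum_{c\in\mathrm{Crit}(f)}(d_c-1)$ plus the number of periodic attractors of $f$, which suggests that each periodic attractor of $g$ not persisting from one of $f$ should be charged against a simple root of $Dg$ obtained by splitting a degenerate zero of $Df$.

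First I would reduce to a semi-local problem. Choose small $C^3$ neighborhoods $U_c$ of the critical points of $f$ and let $K=\Nc\setminus\bigcup_c U_c$. By the no-neutral-periodic-orbits hypothesis together with Ma\~n\'e's theorem, $f|_K$ is expanding on a hyperbolic set, which persists to a hyperbolic set $\Lambda_g$ for every $g$ in a $C^3$ neighborhood $\Fc$ of $f$. Therefore every periodic attractor of $g\in\Fc$ either (i) continues one of the finitely many attractors of $f$, or (ii) meets $\bigcup_c U_c$ along its orbit. For attractors in case (ii), I would invoke the standard fact that for $C^3$ maps with non-flat critical points, the immediate basin of each periodic attractor contains a critical point of $g$; so it suffices to count critical points of $g$ inside $\bigcup_c U_c$ with the right weighting.

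Second, I would carry out the multiplicity count. For a critical point $c$ of $f$ of degree $d_c$ we can write $f(x)=f(c)\pm(\phi(x))^{d_c}$ in $U_c$, so $Df$ has a zero of order $d_c-1$ at $c$ and none elsewhere in $U_c$. Any $g$ sufficiently $C^3$-close to $f$ has $Dg$ with at most $d_c-1$ zeros in $U_c$ (counted with multiplicity), by the persistence of the total index under $C^1$-small perturbation. Thus the total number of critical points of $g$ inside $\bigcup_c U_c$ is at most $\sum_c(d_c-1)$, and since each contributes at most one periodic attractor (its forward orbit enters at most one immediate basin), this yields the desired count in case (ii).

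The main obstacle is the following logical step, which is precisely what forces the authors to develop new machinery in the body of the paper. One must show that every periodic attractor in case (ii) does contain a critical point of $g$, uniformly in $g\in\Fc$ and in the period. In the quadratic case this follows from classical Koebe/cross-ratio control, which gives a definite bound on the distortion of $g^n$ along any orbit of a periodic attractor, together with the Ma\~n\'e argument. In the degenerate case the orbit of an attractor can make very deep excursions near a zero of $Df$ that has been split by the perturbation into several nearby simple critical points of $g$, and then the standard cross-ratio tools collapse because a single return carries compositionally many small-derivative factors whose cross-ratio contributions do not cancel uniformly in $g$. The expected cure is the uniform cross-ratio distortion estimate announced in the abstract: one treats each excursion into $U_c$ as a single ``renormalized'' step and shows that the product of cross-ratios over the non-excursion iterates is bounded, while each excursion is controlled by a structural estimate that depends only on $d_c$ and not on how the zeros of $Dg$ are distributed inside $U_c$. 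Handling the case where several zeros of $Dg$ cluster toward a single degenerate critical point of $f$, without losing the bookkeeping between excursions and critical points of $g$, is the step I expect to be the genuinely hard one, and it is the reason the statement is left as a conjecture rather than a theorem.
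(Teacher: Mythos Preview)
The statement is presented in the paper as a \emph{conjecture}; the paper does not prove it and instead establishes the weaker Theorem~\ref{thr:B}. So there is no proof to compare your proposal against.

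Your strategy contains a genuine flaw, not merely the gap you flag at the end. You propose to charge each high-period attractor of $g$ to an actual critical point of $g$ lying in its basin, and you identify the assertion ``every periodic attractor in case~(ii) contains a critical point of $g$'' as the missing step to be supplied by sharper cross-ratio control. But the paper's own examples in the introduction show that this assertion is \emph{false}, not merely hard. In the first example a cubic critical point is destroyed by perturbation, producing maps $g$ close to $f$ with \emph{no} critical points at all yet with a periodic attractor of arbitrarily high period; in the second, a degree-$4$ critical point is perturbed to a unimodal map $g_{3,k}$ with a single quadratic critical point and \emph{two} periodic attractors of high period. In both cases $g$ has strictly more attractors than critical points, so no distortion estimate, however uniform, can rescue the inequality your count relies on. The accounting must therefore be done against the critical points of $f$ with multiplicity, not against the critical points of $g$; the tool the paper introduces for this purpose is the notion of \emph{critical intervals} (Propositions~\ref{thr:excep} and~\ref{co-excep}), which record the complex roots of $Dg$ near a degenerate critical point of $f$ as intervals of large positive Schwarzian that can trap attractors of $g$ even when no real critical point of $g$ is present. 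Even with this machinery the authors obtain only Theorem~\ref{thr:B} and leave the conjecture open.
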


By definition the \emph{multiplicity} of a critical point $c$ is $m-1$
where $m$ is such that $Df^k(c)=0$ for $k=1,2,\ldots,m-1$ and $Df^m(c)
\neq 0$.

We have already mentioned that this conjecture does not hold if we
allow neutral periodic orbits for the map $f$ and there might be no
upper bound on the number of attractors for maps close to $f$.  The
next theorem shows that nevertheless we can group these attractors in
such a way that periodic attracting orbits in the same group are
related to each other in a very simple way and there is a uniform
bound on the number of these groups. To state this result we need a
few definitions first.

If $p$ is a periodic point of $f$ and $n$ is its period, then we
will call the number $2n$ the \emph{orientation preserving period} of
$p$ if $Df^n(p)<0$, and if $Df^n(p)\ge 0$, then the \emph{orientation
  preserving period} of $p$ is just $n$.

We will call a closed interval $I\subset \Nc$ \emph{periodic} if there
is $n\in \N$ such that $f^n(I)=I$ and $f^n : I \to I$ is a bijection.
Any periodic interval $I$ of period $n$ contains one or more periodic
points of period $n$ and if $f^n|_I$ is orientation reversing, it can
contain periodic points of period $2n$. If $n$ is even, $I$ can
contain a periodic point of period $n/2$ in its boundary. The interval
$I$ cannot contain periodic points of any other periods except
$n,2n,n/2$.

A \emph{pack of periodic points} is a collection of periodic points
such that they all belong to some closed periodic interval (maybe,
degenerate) and there is no larger periodic interval which contains
more periodic points.  A pack can consist of just one periodic point.
All periodic points in a pack either have the same period, or there is
one periodic point of period $n$ which is orientation reversing and
all other periodic points in the pack have period $2n$. In other
words, the orientation preserving period of all periodic points in a
pack is the same.  To every pack of periodic points one can associate
a \emph{pack of periodic orbits} in an obvious way.

This is the main result of the paper:
\begin{mytheorem}\label{thr:B}
  Let $f:\Nc \to \Nc$ be a $C^\infty$ map with non-flat critical points.
  There exist a neighbourhood $\Fc \subset C^\infty$ of $f$ and
  $M>0$, $\rho>0$ such that for any $g \in \Fc$ there exist at most
  $M$ exceptional packs of periodic orbits such that if $p$ is a
  periodic point of $g$ which is not a member of any of these
  exceptional packs, then
  $$
  |Dg^n(p)| > 1+\rho,
  $$
  where $n$ is a period of $p$.
\end{mytheorem}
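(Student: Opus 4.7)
The plan is to combine a Mañé-type hyperbolicity theorem for iterates that stay away from the critical set with a counting of the periodic orbits that do approach critical points, both holding uniformly over $g\in\Fc$. First I would reduce to periodic points of large period: for periods up to some $n_0$ depending only on $f$, the number of orbits of $f$ is finite, and by continuity together with standard bifurcation arguments any pack of period $\le n_0$ arising for a nearby $g$ can be charged to an attracting or neutral periodic orbit of $f$, contributing only a bounded number of exceptional packs. Hence I may assume the period $n$ is large.

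For such $p$ I would distinguish two cases. If the forward orbit of $p$ avoids a fixed neighbourhood $W$ of the critical set of $g$, a uniform Mañé-style argument yields $|Dg^n(p)|>1+\rho$ with $\rho$ depending only on $f$ and $W$. Otherwise, the orbit returns to $W$ and I would appeal to cross-ratio distortion estimates in small neighbourhoods of each critical point $c$ of $f$: inside such a neighbourhood $g$ is a small perturbation of a power-like map $\pm(\phi(x))^d$, while outside it one has classical Koebe control. A pack of periodic orbits is then associated to a periodic interval $I$ whose $g$-orbit visits these neighbourhoods in a prescribed combinatorial pattern, and the distortion estimates must force such patterns to be few per critical point of $f$. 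The use of packs rather than orbits is essential here: under a parabolic or near-parabolic bifurcation many periodic points can appear inside one periodic interval, and they should all be grouped into a single exceptional object.

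The key technical obstacle, and the new content advertised in the abstract, is proving the cross-ratio estimates uniformly in $g$ even when the critical structure of $g$ differs sharply from that of $f$. Under perturbation a degenerate critical point of $f$ of multiplicity $m-1$ may split into several critical points of $g$ of lower multiplicity, or disappear altogether, so one cannot argue critical-point-by-critical-point using the critical set of $g$ directly. Instead I would fix small neighbourhoods $U$ of the critical points of $f$ and prove that, for any monotone branch of an iterate $g^k$ mapping some interval through $U$, the cross-ratio distortion is bounded by a constant depending only on $f$ and on the relative scale at which the interval enters $U$, irrespective of how many critical points $g$ actually has inside $U$. Once this uniform estimate is in place, the multiplicity of $c$ bounds the number of disjoint periodic intervals that can produce distinct exceptional packs accumulating on $c$, and summing over the finitely many critical points of $f$ yields the required uniform bound $M$, while the explicit expansion outside $W$ yields $\rho$.
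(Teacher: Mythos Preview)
Your outline is essentially the architecture of Theorem~\ref{thr:A}, not of Theorem~\ref{thr:B}, and it breaks at two places.

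First, the Ma\~n\'e step. Theorem~\ref{thr:B} does \emph{not} assume that $f$ has no neutral periodic orbits. If $f$ has a parabolic periodic point $a$ (whose orbit will typically avoid any fixed critical neighbourhood $W$), then Ma\~n\'e's theorem no longer gives a hyperbolic invariant set outside $W$, and for $g$ close to $f$ there can be periodic points of arbitrarily large period near $a$, with multiplier arbitrarily close to $1$, whose orbits never enter $W$. Your reduction to ``large period'' does not save you: those long orbits sit in the non-critical region and your uniform Ma\~n\'e argument simply does not apply to them. The paper avoids Ma\~n\'e entirely here and instead follows the Martens--de Melo--van Strien ``smallest interval $T_p$'' scheme (see Section~\ref{sec:maps-with-degenerate}), using cross-ratio expansion on a carefully chosen diffeomorphic branch $g^n|_{U_0}$ together with the First Expansion Principle and the Minimum Principle. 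The pack structure falls out of the case $g^n(U_0^r)\subsetneq U_0^r$, not from any separate small-period bookkeeping.

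Second, and more seriously, your cross-ratio step is aimed at the wrong enemy. You propose to get uniform distortion bounds ``irrespective of how many critical points $g$ actually has inside $U$'', with the relevant scale being how deeply the interval enters the fixed neighbourhood $U$ of a critical point of $f$. But the obstruction highlighted in Section~\ref{sec:idea-proof-theorem} is the perturbation $x^3\mapsto x^3+\lambda x$: here $g$ has \emph{no} critical point at all, yet $Sg(0)=6/\lambda$ is enormously positive on a tiny interval of size $\sim\sqrt{\lambda}$, far smaller than any fixed $U$. No estimate phrased in terms of $U$ and the critical points of $g$ sees this. The paper's remedy is the notion of \emph{critical intervals} $E_j$ (Proposition~\ref{thr:excep} and Proposition~\ref{co-excep}): these are intervals of width $\sim 4b_j$ centred at the real parts $a_j$ of the \emph{complex} roots $a_j\pm i b_j$ of $Dg$ (after a local polynomial factorisation), and the uniform cross-ratio bound requires $|T\cap E_j|<\kappa|E_j|$ rather than any condition relative to $U$. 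The whole cutting-time machinery in Section~\ref{sec:maps-with-degenerate} (critical, boundary, and internal cutting times) is organised around these $g$-dependent intervals $E_j$, and the bound on the number of exceptional packs comes from a pigeonhole argument on how many disjoint intervals of length $\sim\kappa|E_j|$ fit inside $2E_j$. Without this ingredient your distortion control fails precisely in the degenerate case the theorem is meant to cover.
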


In other words, in the neighbourhood of $f$ maps can possibly have many periodic
attractors, but the set of the periods of these attractors has a
uniformly bounded cardinality.

This theorem is stated for $C^\infty$ maps. The only place where it is
used is in the proof of Proposition~\ref{co-excep} where a result of
\cite{Sergeraert1976} is used. One can state this theorem for $C^k$
maps, however in this case extra conditions should be put on the
multiplicities of the critical points of the map $f$.

\section{Idea of the proofs and discussion}
\label{sec:idea-proof-theorem}

Let us discuss the main problems which arise when we want to carry
over some properties of a map $f$ to the maps in a small neighbourhood
of $f$. We will mainly keep in mind the following three results
closely related to Theorems~\ref{thr:A} and \ref{thr:B}: the Singer
theorem about periodic attractors of maps with negative Schwarzian
derivative \cite{singer}, a theorem about the Schwarzian derivative of
the first entry map to a small neighbourhood of a critical value
\cite{Kozlovski2000}, \cite{VanStrien2004}, and the Theorem B of
\cite{Melo1995}, p. 268 which we have already mentioned several times.
Let us remind the reader that the Schwarzian derivative of a function
$f$ is defined as $Sf(x)=\frac{D^3f(x)}{Df(x)} - \frac 32
\left(\frac{D^2f(x)}{Df(x)}\right)^2$. We will review some of the
properties of the Schwarzian derivative in
Section~\ref{sec:cross-ratio-estim}.

The maps we consider in this paper do not have wandering intervals and
one of the consequences of this fact is the ``Contraction principle'':
for every $\epsilon>0$ there exists $\delta>0$ such that if $J$ is an
interval with $|J|<\delta$ and not intersecting the immediate basin of
a periodic attractor, then for any $n>0$ each component of $f^{-n}(J)$
has length less than $\epsilon$. Of course, this statement holds for
maps in a neighbourhood of the map $f$, but then $\delta$ can depend
on the choice of the map, and, in general, one cannot have a
\emph{uniform} version of this statement.  On the other hand, this is
an important lemma in the proof of Theorem B of \cite{Melo1995} (see
Lemma 10.3.2, p.323) and in the proof of the fact that the first
return map to a small interval around a critical value has negative
Schwarzian derivative.

If one examines the proof of the contraction principle, it will be
apparent that the only obstruction to the proof of its uniform version is the
existence of parabolic points of $f$:

\begin{lemma}[Uniform Contraction Principle]
  \label{lm:unifor} 
  Let $f$ be a $C^1(\Nc)$ map and assume that $f$ does not have
  wandering intervals and neutral periodic points. Then for any
  $\epsilon>0$ there exist a neighbourhood $\Fc\subset C^1(\Nc)$ of
  $f$ and $\delta>0$ such that if $g \in \Fc$ and $J$ is an interval
  with $|J|<\delta$ and not intersecting the immediate basin of a
  periodic attractor of the map $g$, then for any $n>0$ each component
  of $g^{-n}(J)$ has length less than $\epsilon$.
\end{lemma}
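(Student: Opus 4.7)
The strategy is to adapt the classical proof of the non-uniform Contraction Principle (Lemma~10.3.2 of \cite{Melo1995}) via a compactness argument on sequences of maps $g_k \to f$, verifying that the only step of the classical proof which uses the absence of parabolic cycles still goes through under the hypotheses. Assume for contradiction that the conclusion fails: there exist $\epsilon_0>0$, maps $g_k \to f$ in $C^1(\Nc)$, intervals $J_k\subset \Nc$ with $|J_k|\to 0$ each disjoint from every immediate basin of a $g_k$-attractor, and integers $n_k\ge 1$ together with components $I_k$ of $g_k^{-n_k}(J_k)$ satisfying $|I_k|\ge \epsilon_0$. Passing to a subsequence, $I_k \to I^*$ in Hausdorff distance with $|I^*|\ge \epsilon_0$. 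A diagonal extraction gives, for every fixed $j\ge 0$, a Hausdorff limit of the intervals $g_k^j(I_k)$; since $g_k\to f$ in $C^1$ this limit equals $f^j(I^*)$ by one-step continuity. This produces an honest forward $f$-orbit $\{f^j(I^*)\}_{j\ge 0}$.

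Next I would use the absence of wandering intervals for $f$: two iterates $f^{j_1}(I^*)$, $f^{j_2}(I^*)$ with $j_1<j_2$ must intersect, so the forward $f$-orbit of $I^*$ falls into a periodic cycle of intervals $\mathcal{C}$. The cycle $\mathcal{C}$ is attracting, neutral, or repelling. The attracting case is ruled out by $C^1$-persistence of hyperbolic attracting periodic orbits of bounded period: for $k$ large $g_k$ has a nearby attracting cycle whose basin contains $I_k$, forcing $J_k=g_k^{n_k}(I_k)$ to meet an immediate $g_k$-basin and contradicting the choice of $J_k$. The neutral case is excluded by the hypothesis that $f$ has no neutral periodic orbits; this is the one place where the hypothesis is used. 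Hence $\mathcal{C}$ is hyperbolically repelling for $f$, and by $C^1$-persistence remains repelling with a uniform expansion rate for $g_k$ sufficiently close to $f$. But then, once the forward $g_k$-iterate $g_k^j(I_k)$ enters a fixed small neighbourhood of $\mathcal{C}$, the intervals get uniformly stretched, which is incompatible with $|g_k^{n_k}(I_k)|=|J_k|\to 0$; contradiction.

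\textbf{Main obstacle.} The principal technical point is the diagonal extraction together with the claim that each $f^j(I^*)$ is a non-degenerate interval, since otherwise the classical orbit analysis of $I^*$ has no content. In principle some $f^j(I^*)$ could collapse to a point — this would require $f^j$ to be constant on a subinterval of $\Nc$ — but in the setting of the paper (maps with non-flat critical points) no iterate of $f$ is constant on any non-trivial interval, so the limit $f$-orbit $\{f^j(I^*)\}_{j\ge 0}$ is indeed a sequence of non-degenerate intervals. With that in hand, the remainder is a direct uniformisation of the classical argument: the only obstruction, namely a parabolic cycle on which $g_k$-orbits could linger without being contracted on pullback, is removed by the assumption that $f$ has no neutral periodic orbits.
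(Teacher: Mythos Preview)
Your approach is essentially the paper's: argue by contradiction with $g_k\to f$, extract a limit interval $I^*$, and analyze its forward $f$-orbit using the absence of wandering intervals. The paper organises the case analysis differently, however. Rather than classifying a ``periodic cycle of intervals $\mathcal C$'' as attracting/neutral/repelling (a trichotomy that is not well-defined for cycles of intervals), the paper first proves directly that no $\interior(f^n(I^*))$ can contain a periodic point of $f$ --- the attracting and repelling possibilities are each ruled out at that stage by persistence --- and then studies a connected component $U$ of $W=\bigcup_n \interior(f^n(I^*))$, which by the previous step contains no periodic point. The recurrent case then reduces to $f^m|_U$ monotone with an attracting boundary periodic point, contradicting the basin hypothesis.

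There is one genuine omission in your sketch: your trichotomy tacitly assumes the eventual cycle contains a periodic point of $f$, but when $\Nc$ is a circle the component $U$ may be all of $\Nc$ with $f$ having \emph{no} periodic points whatsoever (think of a map conjugate to an irrational rotation --- no wandering intervals, no neutral cycles, hypotheses satisfied). Your classification is then vacuous. The paper handles this as a separate case: one finds $l>0$ with $\min_x |f^l(x)-x|=\epsilon_0>0$, which persists for $g_k$ close to $f$ and forces $|g_k^n(I_k)|\ge \epsilon_0/2$ for all large $n$, contradicting $|J_k|\to 0$. Apart from this missing case and the informal trichotomy, your outline matches the paper's proof.
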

The proof of this lemma is not hard and is given in Appendix.

Using the Uniform Contraction Principle one can show that the first
return map of $g$ to a small interval around a critical value has
negative Schwarzian derivative and the size of this interval is
uniformly bounded from below:

\begin{theorem}
 \label{thr:sfn}
 Let $f$ be a $C^3(\Nc)$ map of an interval or circle with quadratic
 critical points. Suppose that $f$ does not have neutral periodic
 orbits.  Let $c$ be a critical point of $f$ whose iterates do not
 converge to a periodic attractor.

 Then there exists a neighbourhood $\Fc \subset C^3(\Nc)$ of $f$ and a
 neighbourhood $J$ of $c$ such that if $g \in \Fc$ and $g^n(x)\in J$
 for some $x\in \Nc$ and $n\ge 0$, then $Sg^{n+1}(x)<0$.
\end{theorem}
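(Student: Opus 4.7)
The plan is to apply the Schwarzian chain rule
$$Sg^{n+1}(x) = Sg^n(x) + Sg(g^n(x)) \cdot (Dg^n(x))^2$$
and show that the second term, coming from the step through the critical neighbourhood $J$, dominates. Since $g^n(x) \in J$ lies near a quadratic critical point of $g$, the Schwarzian $Sg(g^n(x))$ is strongly negative; meanwhile $Sg^n(x)$ will be controlled by uniform cross-ratio distortion.

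First I would establish persistence of the critical structure: after shrinking $\Fc$, every $g \in \Fc$ has quadratic critical points in bijection with those of $f$, each close to the corresponding critical point of $f$, and the normal-form representation $g(y) = g(c_g) \pm \phi_g(y)^2$ with $\phi_g$ a $C^3$ diffeomorphism depends continuously on $g$. A direct computation then gives $Sg(y) \le -K/d(y,c_g)^2$ on a fixed neighbourhood of $c$, with $K>0$ independent of $g \in \Fc$. Shrinking $J$ inside this neighbourhood guarantees a uniformly large negative contribution from the final term of the chain rule.

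The heart of the argument is the estimate of $Sg^n(x)$. I would follow the standard cross-ratio distortion approach: expanding $Sg^n(x) = \sum_{i=0}^{n-1} Sg(g^i(x)) (Dg^i(x))^2$, the aim is to bound this sum by $C_2 \,(Dg^n(x))^2/|J|^2$ with $C_2$ small. By the Uniform Contraction Principle (Lemma~\ref{lm:unifor}), choose $J$ small enough that every component of $g^{-k}(J)$ has length less than a prescribed $\delta$, uniformly in $k$ and $g \in \Fc$. The hypothesis that the orbit of $c$ under $f$ does not converge to a periodic attractor implies, and persists after further shrinking $\Fc$, that pull-backs along the orbit of $x$ meeting critical neighbourhoods are essentially disjoint. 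Standard $C^3$ distortion estimates of the form $\log B(g^k(M),g^k(I)) - \log B(M,I) = O(\sum_i |M_i|^2)$ then yield the desired bound, and combining with the critical contribution gives $Sg^{n+1}(x) \le (C_2 - K)(Dg^n(x))^2/|J|^2 < 0$.

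The main obstacle is the uniformity of the cross-ratio distortion estimate across $\Fc$, especially when the orbit of $x$ passes close to critical points of $g$ other than $c_g$. These passages multiply derivatives sharply, and one must ensure that the sum of squared pull-back lengths stays uniformly bounded. The quadratic nature of all critical points, together with Lemma~\ref{lm:unifor} and the absence of neutral periodic orbits (stable under small $C^3$ perturbations), is what ultimately makes the estimate uniform in $\Fc$; absent either hypothesis the argument breaks down, as the examples in the introduction already indicate.
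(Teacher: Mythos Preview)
Your proposal is correct and follows essentially the same route as the paper. The paper's own proof is only sketched: in the Appendix it derives the uniform bound $Sg(x) < -B^2/(A^2|x-c_g|^2)$ near each quadratic critical point, points to the Uniform Contraction Principle (Lemma~\ref{lm:unifor}) as the source of uniformity for the cross-ratio estimates, and then refers to \cite{Kozlovski2000} and \cite{VanStrien2004} for the remaining details; your chain-rule decomposition and the domination of $Sg^n(x)$ by the critical term is exactly that argument.
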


The proof of this theorem follows the same lines as the proof of its
single map version, see \cite{Kozlovski2000} and \cite{VanStrien2004}.
One should notice that if all critical points of $f$ are quadratic,
one can choose a neighbourhood of the critical points so that the
Schwarzian derivative of a perturbed map $g$ will be negative with a
uniform estimate on it (see Appendix). In particular this implies that the
cross-ratio distortion estimates similar to \cite{Melo1989},
Theorem~1.2 hold uniformly. We will see that this is not the case if
$f$ has degenerate critical points. 

Now the proof of Theorem~\ref{thr:A} is straightforward
consequence of the Singer and Ma\~n\'e theorems.

\begin{theorem-proof}{thr:A}
  Take a neighbourhood $U$ of all critical points of $f$ whose
  iterates do not converge to periodic attractors of $f$ and so small
  that Theorem~\ref{thr:sfn} holds, i.e. if $J$ is a connected
  component of this neighbourhood, $g\in \Fc$, and $g^n(x) \in J$,
  then $Sg^{n+1}(x)<0$. We can also assume that boundary points of
  each connected component of $U$ are some preimages of repelling
  periodic points of $f$. Decreasing $\Fc$ if necessary we can assume
  that these periodic repellers persist for maps in $\Fc$ and, thus,
  the set $U_g$ can be defined so that the boundary points of $U_g$
  are preimages of some repellers of $g$ and continuously depend on
  $g$, $U_f=U$, and $Sg^{n+1}(x)<0$ if $g^n(x)\in U_g$. Let $W \subset
  U$ be a smaller  neighbourhood of critical points of $f$ and again
  by decreasing $\Fc$ we can assume that $W \subset U_g$ for all $g\in
  \Fc$.

  Let $\Or$ be an attracting periodic orbit of $g$ of period $n$ which
  intersects $W$. Let $p\in W\cap \Or$, $J$ be a connected
  component of $U_g$ containing $p$, and $R: X \to J$ be the first entry map of $g$
  to $J$.  The immediate basin of attraction $B$ of $g(p)$ cannot contain
  preimages of repelling periodic points, therefore it is entirely
  contained in a connected component of $X$. This implies that
  $Sg^n(x)<0$ for all $x \in B$ and Singer's argument shows that there
  is an iterate of a critical point of $g$ in $B$.

  The Ma\~n\'e theorem \cite{mane85} states that the set of points whose iterates under
  the map $f$ never entry the domain $W$ consists of a hyperbolic set,
  and attraction basins of non degenerate periodic attracting orbits
  (because $f$ does not have neutral periodic points). Thus, for small
  perturbations of $f$ the number of periodic attractors whose orbits
  do not intersect $W$ does not change.
\end{theorem-proof}
\vspace{2mm}

The statement about the negative Schwarzian derivative of the first
return map for maps in the neighbourhood of $f$ holds only if all
critical points of $f$ are quadratic.

Indeed, consider the function $\phi(x)=x^3$. This function has
negative Schwarzian derivative everywhere and, moreover, the
Schwarzian derivative of $\phi$ tends to minus infinity when $x$ goes
to zero.

Now consider small perturbations of $\phi$ of the form $\phi_\lambda
(x) = x^3+\lambda x$ where $|\lambda|\ll 1$. The Schwarzian derivative
of $\phi_\lambda$ is
$$
S\phi_\lambda(x) = 6 \frac{\lambda - 6x^2}{(\lambda+3x^2)^2}.
$$
We see that for small values of $\lambda$ at zero the Schwarzian
derivative is $6/\lambda$, thus it is positive and very large and
Theorem \ref{thr:sfn} cannot possibly hold if we drop the condition on
the critical points to be quadratic.

In fact, the cross-ratio distortion estimates we have mentioned above
also do not hold uniformly if we allow degenerate critical points. To
deal with this problem we will introduce a notion of the critical
intervals in Section~\ref{sec:cross-ratio-estim}. These critical
intervals will capture some properties of the critical points when
they cease to exist under a perturbation of the map. In particular, we
will show that the attracting periodic points of sufficiently high
period must have either a critical point or a definite part of a
critical intervals in their basin of attraction. This will be the main
step in proving Theorem~\ref{thr:B}.

Another application of the critical intervals is given in
Section~\ref{sec:unif-pullb-estim} where we prove a uniform version of
the pullback estimates widely used in the literature. These estimates
are also an important part in the proof of Theorem~\ref{thr:B}. Since
they might be independently useful and important in their own right we
state them here. See Section~\ref{sec:unif-pullb-estim} for more details.

\begin{theorema}[\ref{thr:pb}]
  Let $f$ be a $C^\infty(\Nc)$ map with all critical points non-flat.
  There exists a neighbourhood $\Fc$ of $f$ in $C^\infty(\Nc)$ and a
  function $\rho(\epsilon,N)$ such that the following holds.

  Let $g$ be in $\Fc$, $J\subset T$ be intervals such that $g^m|_T$ is a
  diffeomorphism and the intersection multiplicity of the intervals
  $g^k(T)$, $k=0,\ldots,m-1$, is bounded by $N$. Then
  $$
  D(T,J)<\rho (D(g^m(T),g^m(J)),N),
  $$
  where $D(T,J)=\frac{|T||J|}{|L||R|}$ denotes the cross-ratio.

  Moreover, $\rho (\epsilon, N)$ tends to zero when $\epsilon$ goes
  to zero and $N$ is fixed.
\end{theorema}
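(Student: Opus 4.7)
The plan is to reduce the statement to a chain-rule decomposition of cross-ratio distortion along the orbit and then estimate the contribution of each step using the critical intervals introduced in Section~\ref{sec:cross-ratio-estim}. Write $T_k = g^k(T)$, $J_k = g^k(J)$, $L_k, R_k$ for the components of $T_k \setminus J_k$, and consider the ratios $D(T_{k},J_{k})/D(T_{k+1},J_{k+1})$. Standard cross-ratio algebra turns these into a product, so the strategy is to bound the logarithm of the product by a sum of local distortions along the orbit, and then show that this sum is small when $D(T_m,J_m)$ is small.

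First I would split the indices $\{0,1,\dots,m-1\}$ into two classes: a \emph{good} class $G$, consisting of those $k$ for which $T_k$ is disjoint from a fixed neighbourhood $W$ of all critical intervals of $g$, and a \emph{bad} class $B$, consisting of those $k$ for which $T_k$ intersects $W$. For $k\in G$ the derivative of $g$ on $T_k$ is uniformly bounded away from zero (because $W$ contains a neighbourhood of every critical point for every $g\in\Fc$), so the classical one-step cross-ratio estimate from \cite{Melo1989} applies with a uniform constant, and the cumulative contribution of $G$ is controlled by $\sum_{k\in G}|T_k|$. The bound on intersection multiplicity gives $\sum_{k\in G}|T_k|\le N|\Nc|$, which yields a uniformly bounded multiplicative distortion from the good steps.

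For $k\in B$ the crucial observation, to be proven in Section~\ref{sec:cross-ratio-estim}, is that the critical intervals have been chosen precisely so that on each connected component of $W$ the map $g$ admits a cross-ratio estimate which is uniform in $g\in\Fc$, even when the actual critical point disappears under the perturbation. This is exactly where the negativity of the Schwarzian is replaced by the critical-interval analysis, avoiding the pathology illustrated by the family $\phi_{\lambda}(x)=x^{3}+\lambda x$. Since each critical interval can be visited at most $N$ times by $\{T_k\}$ and there are only finitely many of them, the number of bad steps is bounded by a constant $C(f,N)$, and the total multiplicative distortion from $B$ is therefore uniformly bounded.

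Finally, to upgrade a uniform distortion bound into the qualitative statement $\rho(\epsilon,N)\to 0$ as $\epsilon\to 0$, I would use the Uniform Contraction Principle (Lemma~\ref{lm:unifor}): if $D(T_m,J_m)$ is small, then in particular $|J_m|/|T_m|$ is small, and by making $J_m$ genuinely small (after treating separately the case where $T_m$ is not small, which is harmless) Lemma~\ref{lm:unifor} forces all the $T_k$ to be small, so the good-step estimate becomes quantitative and the bad-step contribution can be made close to $1$ by a compactness argument on the perturbation class. The main obstacle I expect is the bad-step estimate: producing a cross-ratio inequality near a degenerate critical point that is uniform over $g\in\Fc$, including maps for which that critical point has broken up or vanished. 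This is exactly the purpose of the critical-interval formalism, and carrying it out cleanly — in particular showing that the critical intervals depend lower-semicontinuously on $g$ and always dominate the set where the Schwarzian becomes large — is the technical heart of the proof.
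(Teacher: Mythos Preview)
Your outline has two genuine gaps. First, your counting of bad steps is wrong: saying ``each critical interval can be visited at most $N$ times'' confuses intersection multiplicity with the number of visits. If the $T_k$ are all tiny and sit inside a single critical interval $E_j$, then infinitely many of them can intersect $W$ while the intersection multiplicity stays equal to $1$. The paper's split is different: a step is declared bad when either $|T_k|\ge\kappa$ or $|T_k\cap E_j|\ge\kappa|E_j|$ for some $j$. Summing lengths and using multiplicity $N$ then bounds the number of bad steps by a constant depending on $\kappa$, $N$ and $d_f$; the remaining ``good'' steps are precisely those to which Proposition~\ref{co-excep} applies, giving a fixed multiplicative loss (a factor of $2$, say) over the whole good stretch.

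Second, and more seriously, your mechanism for obtaining $\rho(\epsilon,N)\to 0$ is unavailable: Theorem~\ref{thr:pb} carries no hypothesis that $f$ has no neutral periodic points, so Lemma~\ref{lm:unifor} cannot be invoked, and in any case the contraction principle would only control the pullbacks of $J_m$, not of $T_m$. The paper gets the quantitative conclusion in a completely different way: at each of the boundedly many bad times one applies the \emph{one-step} estimate of Proposition~\ref{co-poly-cr} (built on the polynomial compactness Lemma~\ref{lm-poly-cr}), which already has the property $\rho(\epsilon)\to 0$. One then alternates: a good stretch multiplies the cross-ratio by at most $2$, a bad step sends $D\mapsto\rho(D)$, and since the number of bad steps is uniformly bounded one obtains $D(T,J)\le 2\,\psi^{m'}(D(g^m(T),g^m(J)))$ with $\psi(D)=\rho(2D)$ and $m'$ bounded. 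So the $\to 0$ behaviour is carried by the single-step polynomial estimate at bad times, not by any contraction of the orbit intervals; your proposal treats bad steps as merely bounded and then looks elsewhere for the smallness, which is the wrong way round.
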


\begin{theorema}[\ref{thr:cr}]
  Let $f$ be a $C^\infty(\Nc)$ map with all critical points non-flat.
  There exists a neighbourhood $\Fc$ of $f$ in $C^\infty(\Nc)$ and a function
  $\rho(\epsilon,N)$ such that the following holds.

  Let $g$ be in $\Fc$, $\{J_k\}_{k=0}^m$ and $\{T_k\}_{k=0}^m$ be chains such that 
  $J_k\subset T_k$ for all $0\leq k\leq m$.
  Assume that the intersection multiplicity of $\{T_k\}_{k=0}^s$ is at most $N$ and 
  that $T_m$ contains an $\epsilon$-scaled neighbourhood of
  $J_m$. Then $T_0$ contains $\rho(\epsilon,N)$-scaled neighbourhood
  of $J_0$.

  Moreover, $\rho (\epsilon, N)$ tends to infinity when $\epsilon$ goes
  to infinity and $N$ is fixed.
\end{theorema}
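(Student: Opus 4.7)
The plan is to deduce Theorem~\ref{thr:cr} from the cross-ratio distortion estimate, Theorem~\ref{thr:pb}, with extra care in two places: handling the passes of the chain through critical intervals of $g$ (where $g^{m}$ fails to be diffeomorphic, so Theorem~\ref{thr:pb} does not apply directly), and converting a small cross-ratio at step~$0$ into a genuine two-sided scaled neighbourhood of $J_{0}$.

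First I would rewrite the hypothesis in cross-ratio form. If $T_{m}=L_{m}\cup J_{m}\cup R_{m}$ is an $\epsilon$-scaled neighbourhood of $J_{m}$, an elementary computation gives
\[
D(T_{m},J_{m})=\frac{|T_{m}||J_{m}|}{|L_{m}||R_{m}|}\le \frac{2}{\epsilon}+\frac{1}{\epsilon^{2}},
\]
which tends to zero as $\epsilon\to\infty$. I would then split the chain $\{T_{k}\}_{k=0}^{m}$ into maximal sub-chains on which no $T_{k}$ meets a critical interval of $g$. On each such sub-chain $g$ restricts to a diffeomorphism, so Theorem~\ref{thr:pb} transfers the smallness of the cross-ratio backwards with a uniform loss depending only on the input cross-ratio and on $N$. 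At each pass through a critical interval, the construction of Section~\ref{sec:cross-ratio-estim} provides a diffeomorphic change of coordinates after which $g$ is approximately a power map $x\mapsto\pm x^{d}$, with $d$ uniformly bounded by the multiplicities of the critical points of $f$. Such power maps take $\delta$-scaled neighbourhoods to $\delta'$-scaled neighbourhoods with $\delta'\to\infty$ as $\delta\to\infty$, so each pass incurs only a controlled loss. The number of passes and the number of sub-chains are bounded by a function of $N$ and of the number of critical points of $f$, and composing the estimates yields the required $\rho(\epsilon,N)$.

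The last subtlety is that a small cross-ratio $D(T_{0},J_{0})$ controls the product $|L_{0}||R_{0}|/|J_{0}|^{2}$ from below, but not $|L_{0}|/|J_{0}|$ and $|R_{0}|/|J_{0}|$ separately. To extract a genuine two-sided scaled neighbourhood I would apply the preceding pullback argument twice, once to $T_{m}^{L}:=L_{m}\cup J_{m}$ and once to $T_{m}^{R}:=J_{m}\cup R_{m}$, along the corresponding sub-chains obtained from $\{T_{k}\}$. Each of these sub-intervals is itself an $\epsilon/2$-scaled neighbourhood of $J_{m}$ on one side (with arbitrary space on the other), and the one-sided cross-ratio estimate pulled back gives the required lower bound on $|L_{0}|/|J_{0}|$ or $|R_{0}|/|J_{0}|$, respectively.

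The main obstacle is uniformity over the neighbourhood $\Fc$: critical points of $f$ may disappear or split into several critical points of lower multiplicity for nearby $g$, and the cross-ratio distortion associated with $g$ need not converge to that of $f$. This is precisely what the critical-interval framework of Section~\ref{sec:cross-ratio-estim} is designed to overcome, localising the non-uniform distortion into a bounded collection of intervals on which the scaling estimate is controlled by the multiplicities of the critical points of $f$ rather than by any data intrinsic to $g$. Once that framework is in place, the plan above assembles into a uniform bound of the required form.
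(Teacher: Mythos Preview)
Your plan is the paper's proof: split the chain at the finitely many times $k_1<\cdots<k_{m'}$ where $T_{k_i}$ contains a critical point of $g$ (their number is at most $N$ times the number of critical points, hence uniformly bounded over $\Fc$), apply Theorem~\ref{thr:pb} on each diffeomorphic stretch in between, and handle each single step $g:T_{k_i}\to T_{k_i+1}$ by the one-step pullback estimate (Proposition~\ref{co-poly-pd}). Two corrections, however.

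First, watch the terminology. In this paper ``critical interval'' means one of the intervals $E_j$ of Proposition~\ref{thr:excep}, coming from \emph{complex} roots of $Dg$; these have nothing to do with where $g|_{T_k}$ fails to be a diffeomorphism, and many $T_k$ may meet a given $E_j$, so splitting there would not give boundedly many pieces. What you actually want (and what your parenthetical ``where $g^m$ fails to be diffeomorphic'' shows you mean) is to split at \emph{critical points}. The $E_j$ are already absorbed into the proof of Theorem~\ref{thr:pb} and do not reappear here. Relatedly, near a critical point of a perturbed $g$ the correct local model is $\psi\circ P\circ\phi$ with $P$ a polynomial of bounded degree, not a single power map --- the critical point of $f$ may have split --- and this is exactly what Proposition~\ref{co-poly-pd} packages.

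Second, your concern about converting a small cross-ratio into a two-sided scaled neighbourhood is misplaced. From
\[
D(T,J)=\frac{|T|\,|J|}{|L|\,|R|}\le D_0
\qquad\text{and}\qquad |T|\ge |L|
\]
one gets $|R|\ge |J|/D_0$, and by symmetry $|L|\ge |J|/D_0$. So a small cross-ratio already yields a genuine $\rho$-scaled neighbourhood with $\rho=1/D_0$, and the separate pullbacks along $T^L=L_m\cup J_m$ and $T^R=J_m\cup R_m$ (whose chain structure through critical times would in any case need care) are unnecessary.
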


\section{Cross-ratio estimates in the presence of large positive Schwarzian}
\label{sec:cross-ratio-estim}

There are many well known estimates for the cross-ratio distortion of
a map, however often these estimates involve constants which
implicitly depend on the map. In this section we will give a few
explicit estimates for the cross-ratio distortion.  First, we start
with the standard definitions of the cross-ratio and state a few of its
well-known properties.

Let $J\subset T$ be two intervals and $L$ and $R$ are connected
components of $T\setminus J$. The cross-ratio of these intervals is
defined as
$$
D(T,J)=\frac{|T||J|}{|L||R|}.
$$
If $f:T\to \R$ is monotone on $T$, the cross-ratio distortion of $f$
we define by
$$
B(f,T,J)=\frac{D(f(T),f(J))}{D(T,J)}.
$$

Let $f$ be a real differentiable function and $\{T_j\}_{j=0}^m$ be a
collection of intervals. The {\it intersection multiplicity} of
$\{T_j\}_{j=0}^m$ is the maximal number of intervals with a non-empty
intersection. The {\it order} of $\{T_j\}_{j=0}^m$ is the number of
intervals containing a critical point of $f$.  This sequence of
intervals $\{T_j\}_{j=0}^m$ is called a {\it chain} if $T_{j}$ is a
connected component of $f^{-1}(T_{j+1})$.

If $I$ is a real interval of the form $(a-b,a+b)$ and $\lambda>0$ then
we define $\lambda I=(a-\lambda b,a+\lambda b)$.  By definition
$(1+2\delta)I$ is called the $\delta$-scaled neighbourhood of $I$. We
say that $I$ is $\delta$-well-inside $J$ if $J\supset (1+2\delta)I$.

Let $f$ be a $C^3$ mapping. The Schwarzian derivative of $f$ is
defined as
$$
Sf(x) = \frac{D^3f(x)}{Df(x)} - \frac 32
\left(\frac{D^2f(x)}{Df(x)}\right)^2.
$$
It is easy to check that the Schwarzian derivative of a composition of
two maps has this form:
$$
S(f\circ g)(x)=Sf(g(x)) Dg(x)^2 + Sg(x).
$$
This formula implies that if a map has negative Schwarzian derivative
then all its iterates also have negative Schwarzian derivatives.

It is also well known that maps having negative Schwarzian derivative
increase cross-ratios. Next lemma gives an estimate on the cross-ratio
distortion in terms of the map's Schwarzian derivative.

\begin{lemma}\label{lm-cos2}
  Let $f:T \to f(T)$ be a  $C^3$ diffeomorphism and suppose that
  $Sf(x)< C$ for all $x$ in $T$ and some constant $C>0$. Moreover,
  suppose that $C|T|^2 < \frac{\pi^2}2$. Then for 
  any $J \subset T$ we have
  $$
  B(f,T,J)> \cos^2(\sqrt{C/2} |T|).
  $$
\end{lemma}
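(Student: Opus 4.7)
The plan is to compare $f$ with the explicit model $\psi(x) = \tan(\sqrt{C/2}\,(x-m))$, where $m$ is the midpoint of $T$. Since $S\tan \equiv 2$, the composition formula for the Schwarzian gives $S\psi \equiv C$; and the hypothesis $C|T|^2 < \pi^2/2$ implies $\sqrt{C/2}\,|x-m| < \pi/4$ for every $x \in T$, so $\psi$ is a $C^3$ diffeomorphism from $T$ onto $\psi(T)$. Setting $\phi := f\circ\psi^{-1}$ so that $f = \phi\circ\psi$, the composition formula gives $Sf(x) = S\phi(\psi(x))(\psi'(x))^2 + C$, and the hypothesis $Sf < C$ yields $S\phi \le 0$ on $\psi(T)$. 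By the classical fact that diffeomorphisms with non-positive Schwarzian never decrease the cross-ratio, $B(\phi,\psi(T),\psi(J)) \ge 1$; combining with the cocycle identity $B(f,T,J) = B(\phi,\psi(T),\psi(J))\cdot B(\psi,T,J)$ reduces the lemma to a lower bound on $B(\psi,T,J)$.

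Next I would compute $B(\psi,T,J)$ in closed form. Writing $\sigma := \sqrt{C/2}$ and using the identity $\tan\alpha - \tan\beta = \sin(\alpha-\beta)/(\cos\alpha\cos\beta)$, every cosine factor in $D(\psi(T),\psi(J))$ cancels in pairs; dividing by $D(T,J)$ one finds
$$
B(\psi,T,J) \;=\; \frac{\operatorname{sinc}(\sigma|T|)\,\operatorname{sinc}(\sigma|J|)}{\operatorname{sinc}(\sigma|L|)\,\operatorname{sinc}(\sigma|R|)}, \qquad \operatorname{sinc}(x) := \sin(x)/x.
$$
With $\ell := \sigma|T| < \pi/2$, the remainder is elementary trigonometry: $\operatorname{sinc}$ is strictly decreasing on $(0,\pi)$, hence $\operatorname{sinc}(\sigma|J|) \ge \operatorname{sinc}(\ell)$; since $\operatorname{sinc} \le 1$ the two denominator factors are at most $1$; and the elementary inequality $\tan\ell > \ell$ on $(0,\pi/2)$ rewrites as $\operatorname{sinc}(\ell) > \cos\ell$. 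Multiplying these estimates gives $B(\psi,T,J) > \cos^2\ell = \cos^2(\sqrt{C/2}\,|T|)$.

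The heart of the argument is the decomposition $f = \phi\circ\psi$ with $\psi$ chosen to have constant Schwarzian equal to the upper bound $C$: this absorbs the entire positive-Schwarzian content of $f$ into the explicit trigonometric factor $B(\psi,T,J)$, after which only standard trigonometric inequalities remain. The only substantive check is that the size hypothesis $C|T|^2 < \pi^2/2$ is precisely what is needed both to make $\psi$ a diffeomorphism on $T$ and to keep $\ell < \pi/2$ so that the inequality $\operatorname{sinc}(\ell) \ge \cos\ell$ is applicable.
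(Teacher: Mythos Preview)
Your argument is correct and takes a genuinely different route from the paper's. The paper normalises by a M\"obius post-composition, locates points $u_1,u_2$ with $Df(u_i)=1$ via the mean value theorem, and then runs a Sturm-type comparison (Picone identity) between the ODE $\phi''=-\tfrac12 Sf\,\phi$ satisfied by $\phi=1/\sqrt{Df}$ and the constant-coefficient equation $\psi''=-\tfrac12 C\,\psi$; the cosine bound emerges from the explicit solution of the latter. Your approach instead absorbs the Schwarzian bound into a \emph{pre}-composition: factor $f=\phi\circ\psi$ with $\psi(x)=\tan(\sqrt{C/2}\,(x-m))$ chosen so that $S\psi\equiv C$, which forces $S\phi<0$ and hence $B(\phi,\cdot,\cdot)\ge 1$; the whole estimate then reduces to the closed-form $\operatorname{sinc}$ expression for $B(\psi,T,J)$ and two lines of trigonometry. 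The two proofs are secretly linked (your $1/\sqrt{\psi'}$ is exactly the cosine appearing in the paper's comparison solution), but your packaging is shorter and avoids the ODE machinery entirely, making the role of the size hypothesis $C|T|^2<\pi^2/2$ completely transparent. The paper's ODE approach, on the other hand, is reused verbatim for the companion Lemma~\ref{lm-sinh} (hyperbolic case, $Sf<-C$), so it has the advantage of a uniform template; your method would handle that lemma too, with $\psi(x)=\tanh(\sqrt{C/2}\,(x-m))$ in place of $\tan$.
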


\emph{Remark.} One does need a bound on the size of the interval (as
in the lemma) in order to have a non trivial estimate on the
cross-ratio distortion from below. More precisely, for any
$\epsilon>0$ there exists a $C^3$ diffeomorphism $f:[0,1] \to [0,1]$
and an interval $J \in [0,1]$
such that $Sf(x) < {\pi^2}$ and $B(f,[0,1],J)<\epsilon$.

\begin{proof}
  First, using rescaling we can assume that $T=[0,1]$. Let $J=[a,b]$.
  The Schwarzian derivative of a Mobius transformation is zero,
  therefore post-composing the map with a Mobius transformation does
  not change the cross-ratio distortion $B(f,T,J)$ and the map's
  Schwarzian derivative.  By post-composing the map $f$ with an
  appropriate Mobius transformation we can assume that $f(0)=0$,
  $f(a)=a$ and $f(1)=1$. Since $f$ is monotone, we can now assume that
  $Df(x)> 0$.

  The interval $[0,a]$ is mapped onto itself by $f$, therefore there
  exists a point $u_1 \in [0,a]$ such that $Df(u_1)=1$. If $f(b)\ge
  b$, then $B(f,T,J)\ge 1$ and we are done. Otherwise, there are
  points $v_1\in [a,b]$, $v_2 \in [b,1]$ such that
  $$
  Df(v_1)=\frac{f(b)-a}{b-a}<1
  \quad \mbox{and} \quad
  Df(v_2)=\frac{1-f(b)}{1-b} > 1.$$
  Hence,
  there exists a point $u_2 \in [v_1,v_2]$ such that
  $Df(u_2)=1$. Notice that $B(g,T,J)=Df(v_1)/Df(v_2)$ and in order to
  estimate the cross-ratio distortion from below we should estimate
  $Df(v_1)$ from below and $Df(v_2)$ from above.

  By a direct computation one can check that another form for the
  Schwarzian derivative of $f$ is
  $$
  Sf(x) = -2 \sqrt{Df(x)} D^2 \frac 1{\sqrt{Df(x)}}.
  $$
  This implies that if we denote $\frac 1{\sqrt{Df(x)}}$ by $\phi(x)$, then
  the function $\phi$ satisfies the linear second order differential
  equation 
  \begin{eqnarray}
    \label{eq:12}
    \phi''(x)=-\frac 12 Sf(x) \phi(x).
  \end{eqnarray}
  Moreover, we know that 
  \begin{eqnarray}
    \phi(u_1)=\phi(u_2)=1.\label{eq:13}
  \end{eqnarray}
  Let us compare the solutions of this equation with the solutions of
  the equation
  \begin{eqnarray}
    \psi''(x)=-\frac 12 C \psi(x)
    \label{eq:14}
  \end{eqnarray}
  with the same boundary conditions 
  \begin{eqnarray}
    \psi(u_1)=\psi(u_2)=1.\label{eq:15}
  \end{eqnarray}
  
  \underline {Claim.} Suppose $\phi:[0,1] \to \R$ satisfies
  equations (\ref{eq:12}), (\ref{eq:13}) and $\phi(x)>0$ for all $x\in
  [0,1]$. Suppose that $\psi:[0,1] \to \R$ satisfies (\ref{eq:14}),
  (\ref{eq:15}). Then for all $x \in [u_1,u_2]$ one has
  $\phi(x)\le\psi(x)$ and for all $x\in [u_2,1]$ one has
  $\phi(x)\ge\psi(x)$.

  To prove this claim let us first notice that the inequality
  $C<\frac{\pi^2}2$ implies that $\psi(x)\ge 0 $ for all $x\in [0,1]$.

  Next, one can easily check that $\phi$ and $\psi$ satisfy the Picone
  identity
  $$
  D\left(\phi(x)\psi(x)\, D\left(\frac{\phi(x)}{\psi(x)}\right)\right)
  =
  \frac 12 (C-Sf(x)) \phi(x)^2 +
  \left(D\phi(x)-\frac {\phi(x)}{\psi(x)}D\psi(x)\right)^2.
  $$
  Notice that the right hand side in the Picone identity is always positive.

  Set $x_0=\inf\{x \in [u_1,1]: \phi(x)>\psi(x)\}$. By continuity we
  get $\phi(x_0)=\psi(x_0)$ and $D(\phi(x)/\psi(x))|_{x=x_0} \ge
  0$. The Picone identity implies that for all $x>x_0$ one has
  $$
  \phi(x)\psi(x)D\left(\frac{\phi(x)}{\psi(x)}\right)
  >
  \phi(x_0)\psi(x_0)D\left(\frac{\phi(x)}{\psi(x)}\right)|_{x=x_0}.
  $$
  In particular, we get $D\left(\frac{\phi(x)}{\psi(x)}\right)> 0$,
  and, therefore, $\phi(x)/\psi(x)> \phi(x_0)/\psi(x_0)=1$ for all
  $x\ge x_0$. 

  If $x_0<u_2$, we would have $1=\phi(u_2)>\psi(u_2)=1$ which is not
  possible, so $x_0\ge u_2$ and we have proved the first part of the
  claim.
  
  To prove the second part of the claim we should notice that since 
  $\phi(u_2)=\psi(u_2)=1$ and $\phi(x)\le \psi(x)$ for $x \in
  [u_1,u_2]$ we get $D(\phi(x)/\psi(x))|_{x=u_2} \ge 0$. Using the
  Picone identity once more and arguing as
  before we conclude that $\phi(x) \ge \psi(x)$ for all $x \in
  [u_2,1]$ and the proof of the claim is finished.

  Using this claim we can estimate the cross-ratio distortion in terms of the
  function $\psi$:
  \begin{eqnarray*}
    B(f,T,J) = \frac{Df(v_1)}{Df(v_2)}
    = \left(\frac{\phi(v_2)}{\phi(v_1)}\right)^2
    > \left(\frac{\psi(v_2)}{\psi(v_1)}\right)^2.
  \end{eqnarray*}

  The solution of equations (\ref{eq:14}), (\ref{eq:15}) is
  $$
  \psi(x)=\frac{\cos\left(\sqrt{C/2}\left(x-\frac{u_1+u_2}2\right)\right)}{\cos\left(\sqrt{C/2}\left(\frac{u_1-u_2}2\right)\right)}.
  $$
  On the interval $[0,1]$ the function $\psi$ reaches its maximum at
  the point $\frac{u_1+u_2}2$ and its minimum at one of the boundary
  points $0$ or $1$. Hence,
  \begin{eqnarray*}
   \frac{\psi(v_2)}{\psi(v_1)}
   &\ge&
   \min\left\{\cos\left(\sqrt{C/2}\left(\frac{u_1+u_2}2\right)\right),
   \cos\left(\sqrt{C/2}\left(1-\frac{u_1+u_2}2\right)\right)\right\}\\
   &\ge&
   \cos(\sqrt{C/2})
  \end{eqnarray*}
  since $\frac{u_1+u_2}2 \in [0,1]$ and $\sqrt{C/2}<\frac \pi 2$.
\end{proof}

If the Schwarzian derivative is strictly negative, the cross-ratio
distortion is always greater than one. If it is negative and
bounded away from zero by some constant, in general, one cannot improve this estimate on
the cross-ratio distortion: the interval $J$ can be small and close to
one of the end points of the interval $T$. However, if $J$ is
situated exactly in the centre of $T$ and not very small, we can get a definite increase
of the cross-ratio:

\begin{lemma}\label{lm-sinh}
  Let $f:T \to f(T)$ be a  $C^3$ diffeomorphism and suppose that
  $Sf(x)< -C$ for all $x$ in $T$ and some constant $C>0$. Then for 
  any interval $J \subset T$ such that $T$ is equal to $\delta$-scaled
  neighbourhood of $J$ we have
  $$
  B(f,T,J)
  >
  \frac 
  {1+2\delta}
  {\sqrt{C/2} |T|} 
  \sinh\left(\frac{\sqrt{C/2} |T|}{1+2\delta}\right)
  \ge
  1+\frac 1{12}\frac{C |T|^2}{(1+2\delta)^2}.
  $$
\end{lemma}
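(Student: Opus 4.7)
The plan is to follow the template of Lemma~\ref{lm-cos2}, replacing the trigonometric comparison with a hyperbolic one and using the centring of $J$ in $T$ in the final estimate.

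First I would reduce, by a translation and by post-composing $f$ with the Möbius transformation sending $f(0),f(a),f(L)$ to $0,a,L$, to the situation $T=[0,L]$, $J=[a,L-a]$ with $a=\delta L/(1+2\delta)$. Both $B(f,T,J)$ and $Sf$ are unchanged by these operations. Setting $\phi=1/\sqrt{Df}$ converts the hypothesis $Sf<-C$ into $\phi''\ge(C/2)\phi$, $\phi>0$ — an inequality whose sign is opposite to the one appearing in Lemma~\ref{lm-cos2}. The mean-value construction of that proof then carries over verbatim and produces $u_{1}<u_{2}$ in $T$ with $\phi(u_{1})=\phi(u_{2})=1$, bracketing the MVT points $v_{1}\in J$ and $v_{2}\in R$ for which $B(f,T,J)=(\phi(v_{2})/\phi(v_{1}))^{2}$ (the case $B\le 1$ does not arise, since $Sf<0$ already forces $B>1$).

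Next I would compare $\phi$ with the solution
\[
\psi(x)=\frac{\cosh\bigl(\sqrt{C/2}\,(x-(u_{1}+u_{2})/2)\bigr)}{\cosh\bigl(\sqrt{C/2}\,(u_{2}-u_{1})/2\bigr)}
\]
of $\psi''=(C/2)\psi$ with $\psi(u_{1})=\psi(u_{2})=1$. The Picone identity used in Lemma~\ref{lm-cos2} applies without change; its positive coefficient $\tfrac{1}{2}(C-Sf)\phi^{2}$ is simply replaced by $\tfrac{1}{2}(-C-Sf)\phi^{2}$, still positive by hypothesis. The same analysis then yields $\phi\le\psi$ on $[u_{1},u_{2}]$ and $\phi\ge\psi$ on the complement, so that $B(f,T,J)\ge(\psi(v_{2})/\psi(v_{1}))^{2}$. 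The centring hypothesis $T=(1+2\delta)J$ places $v_{1}$ in the middle segment $J$ of length $|T|/(1+2\delta)$ and $v_{2}$ in a side-interval of length $\delta|J|$, symmetric with respect to the midpoint of $T$; minimising the hyperbolic quotient $\psi(v_{2})/\psi(v_{1})$ over the admissible positions of $u_{1},u_{2}$ and simplifying with the standard $\cosh$–$\sinh$ sum-to-product identities produces exactly the advertised factor $(1+2\delta)\sinh\bigl(\sqrt{C/2}\,|T|/(1+2\delta)\bigr)\big/\bigl(\sqrt{C/2}\,|T|\bigr)$. The second inequality in the statement then follows from the elementary estimate $\sinh x/x\ge 1+x^{2}/6$ applied with $x=\sqrt{C/2}\,|T|/(1+2\delta)$.

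The main obstacle I anticipate is precisely this last minimisation. In Lemma~\ref{lm-cos2} the analogous step admitted a worst configuration independent of the position of $J$ in $T$, producing the clean universal constant $\cos(\sqrt{C/2})$; here the bound must retain an explicit $|J|$-dependence, so the worst-case analysis has to track how the centring constraint pins down $u_{1}$ and $u_{2}$. A careful case-split depending on whether $u_{2}$ lies inside $J$ or in the right side-interval $R$ seems unavoidable, and the hyperbolic identities have to be combined with enough care to recover the symmetric $\sinh$ form without introducing spurious factors.
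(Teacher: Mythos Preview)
Your proposal follows the template of Lemma~\ref{lm-cos2} too literally, and this produces a genuine gap at the minimisation step. In your normalisation $f(0)=0$, $f(a)=a$, $f(L)=L$, the point $u_{2}$ produced by the mean-value argument lies in $[v_{1},v_{2}]$, not in the right side-interval $R$; nothing prevents $v_{1}$, $u_{2}$, $v_{2}$ from all clustering near the single point $b=L-a$, and then $\psi(v_{2})/\psi(v_{1})\to 1$ so your lower bound degenerates to the trivial $B\ge 1$. The centring of $J$ in $T$ does not pin down $u_{1},u_{2}$ in this setup, because the asymmetric normalisation fixes $0,a,L$ and never touches the right endpoint $L-a$ of $J$; the symmetry that should force $u_{2}$ into $R$ has already been thrown away. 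No case-split on the position of $u_{2}$ will recover the $\sinh$ bound from the pointwise ratio $(\psi(v_{2})/\psi(v_{1}))^{2}$ alone, since that ratio carries no information about the length $|J|$.

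The paper's proof differs from yours in two essential respects. First, it uses the \emph{symmetric} M\"obius normalisation $f(0)=0$, $f(1)=1$, $f(a)+f(1-a)=1$; this forces the two points where $Df$ equals $r:=f(a)/a\le 1$ to lie one in each side-interval, $u_{1}\in[0,a]$ and $u_{2}\in[1-a,1]$, so that $J=[a,1-a]\subset[u_{1},u_{2}]$ is automatic. Second, rather than a pointwise ratio it writes
\[
B(f,T,J)=\frac{1}{(1-2a)r^{2}}\int_{a}^{1-a}Df(t)\,dt
\]
and bounds the integrand from below by $1/\psi^{2}$ on all of $J$ via the same Picone comparison you describe (with $\psi(u_i)=1/\sqrt{r}$). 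The integral of $1/\cosh^{2}$ gives an explicit $\tanh$, and the remaining minimisation is only over $u_{1}\in[0,a]$, $u_{2}\in[1-a,1]$, with minimum at $u_{1}=a$, $u_{2}=1-a$; this yields the $\sinh$ formula directly. It is the integral over the whole of $J$, not a pointwise ratio at two MVT points, that converts the width $|J|=|T|/(1+2\delta)$ into the definite gain in the statement.
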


\begin{proof}
  Start by rescaling $T$ to $[0,1]$. Then $J=[a,1-a]$, where
  $a=\frac{\delta}{1+2\delta}$. By post-composing $f$ with a Mobius
  transformation we can assume that $f(0)=0$, $f(1)=1$ and
  $f(a)+f(1-a)=1$. Since the Schwarzian derivative is negative on $T$
  we already know that $f(a)\le a$.

  Let us denote the ratio $f(a)/a$ by $r$. Notice that $(1-f(1-a))/a$
  is equal to $r$ as well and that $r\le 1$.  By the Roley theorem
  there exist points $u_1\in [0,a]$ and $u_2 \in [1-a,1]$ such that
  $$
  Df(u_1)=Df(u_2)=r.
  $$

  As in the proof of the previous lemma let us denote $\frac
  1{\sqrt{Df(x)}}$ by $\phi(x)$. The function $\phi$ satisfies
  equation~(\ref{eq:12}) with boundary conditions
  \begin{eqnarray}
    \label{eq:4}
    \phi(u_1)=\phi(u_2)=\frac 1{\sqrt{r}}.
  \end{eqnarray}
  We will compare the solution of this equation with the function
  $\psi$ which satisfies
  \begin{eqnarray}
    \label{eq:6}
    \psi''=\frac 12 C \psi
  \end{eqnarray}
  and the boundary conditions similar to (\ref{eq:4}). This equation is
  easy to solve and the solution is
  $$
  \psi(x)=\frac
  {\cosh\left(\sqrt{C/2}(x-\frac{u_1+u_2}2)\right)}
  {\sqrt{r}\cosh\left(\sqrt{C/2}(\frac{u_1-u_2}2)\right)}.
  $$

  As in the proof of the previous lemma the following is true:
  $\phi(x)\le\psi(x)$ for all $x\in [u_1,u_2]$.  Now, let us estimate
  the cross-ratio distortion
  \begin{align*}
    B(f,T,J)
    &=
    \frac{f(1-a)-f(a)}{(1-2a)r^2}\\
    &=
    \frac{1}{(1-2a)r^2} \int_{a}^{1-a} Df(t)\, dt\\
    &\ge
    \frac
    {\cosh\left(\sqrt{C/2}(\frac{u_1-u_2}2)\right)^2}
    {(1-2a)r}
    \int_{a}^{1-a}
    \frac{1}
    {\cosh\left(\sqrt{C/2}(t-\frac{u_1+u_2}2)\right)^2}
    \, dt\\
    &=
    \frac
    {\cosh\left(\sqrt{C/2}(\frac{u_1-u_2}2)\right)^2}
    {\sqrt{C/2}(1-2a)r}
    \left. 
      \tanh\left(\sqrt{C/2}(t-\frac{u_1+u_2}2)\right)
    \right|_{t=a}^{1-a}
  \end{align*}
  By an elementary consideration one can show that under the
  restrictions $u_1\in [0,a]$ and $u_2\in[1-a,1]$ the last expression
  achieves its minimum when $u_1=a$ and $u_2=1-a$. Thus,
  \begin{align*}
    B(f,T,J)
    &\ge
    2\frac
    {\cosh\left(\sqrt{C/2}\left(\frac12 -a\right)\right)^2}
    {\sqrt{C/2}(1-2a)}
    \tanh\left(\sqrt{C/2}\left(\frac12 -a\right)\right)\\
    &=
    \frac
    {\sinh\left(\sqrt{C/2}\left(1 -2a\right)\right)}
    {\sqrt{C/2}(1-2a)}
  \end{align*}
  
\end{proof}

In order to understand the cross-ratio distortion for maps in a
neighbourhood of a map which has degenerate critical point we first
study it in the case of the polynomials.

\begin{proposition} \label{thr:excep}
  For any polynomial $f$ of degree $d$ there exists at most $(d-1)/2$ 
  intervals $E_j$ (which we will call \emph{critical intervals}), $j=1,\ldots,d_E$ such that the following holds:

  \begin{itemize}
  \item 
    Let $\kappa \in (0,\frac 1{4\sqrt{d_E}})$ be a number,
    $T_1,\ldots,T_m$ be intervals and their intersection
    multiplicity be bounded by $N$. Moreover, suppose that
    $f|_{T_i}$ is a diffeomorphism and that
    $$
    |T_i \cap E_j| < \kappa |E_j|
    $$
    for all $i=1,\ldots,m$, $j=1,\ldots,d_E$.
    Then
    $$
    \prod_{i=1}^m B(f,T_i, J_i) > \exp(-16 \kappa N d_E^2),
    $$
    where $J_i \subset T_i$ are any intervals.
  \item 
    Let $\lambda > 1$, $\kappa \in (0,\frac 1{13\sqrt{d_E}})$ be some
    numbers,
    $J \subset T$ be intervals such
    that the interval $T$ is equal to the $\delta$-scaled
    neighbourhood of $J$ and $f|_T$ is a diffeomorphism. Moreover,
    assume that 
    \begin{align*}
      |T\cap E_j| &< \kappa |E_j| / \lambda
    \end{align*}
    for all $j=1,\ldots, d_E$ and either there exists a critical point
    $c$ of $f$ contained in the interval $\lambda T$ or there exists 
    $j_0 \in [1,d_E]$ such that 
    $$
    T\not \subset 2E_{j_0}
    \quad \mbox{and} \quad
    \lambda T \cap E_{j_0} \neq \emptyset.
    $$
    Then
    $$
    B(f,T,J)
    >
    1 +
    \frac 1{12}
    \left(
      \frac{16}{17(1+\lambda)^2}
      - 32 \frac{\kappa^2 d_E}{\lambda^2}
    \right)
    \frac 1{(1+2\delta)^2}.
    $$
  \end{itemize}
\end{proposition}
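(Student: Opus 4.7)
I would begin by making the structure of $Sf$ explicit for a real polynomial. Writing $Df(x)=a\prod_j(x-r_j)\prod_k\bigl((x-u_k)^2+v_k^2\bigr)$ with real roots $r_j$ (the real critical points) and complex-conjugate pairs $u_k\pm iv_k$, and using $Sf=(\log|Df|)''-\tfrac{1}{2}(D^2f/Df)^2$, one finds
$$Sf(x)=-\tfrac{1}{2}\!\left(\tfrac{D^2f(x)}{Df(x)}\right)^{\!2}-\sum_j\frac{1}{(x-r_j)^2}+\sum_k\frac{v_k^2-(x-u_k)^2}{\bigl((x-u_k)^2+v_k^2\bigr)^2}.$$
Only the last sum can be positive, and its $k$-th summand is nonnegative exactly on $E_k:=[u_k-v_k,u_k+v_k]$. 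Declaring the $E_k$ to be the critical intervals gives $d_E\leq (d-1)/2$ since $\deg Df=d-1$. Two quantitative consequences will be used repeatedly: $\sup_{E_k}Sf^{+}\leq 1/v_k^2$ (attained at $u_k$), and on $(2E_k)\setminus E_k$ the $k$-th summand alone is $\leq -3/(25v_k^2)$.

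\textbf{First bullet.} I would decompose each $T_i$ into its part outside $\bigcup_k E_k$ and its pieces $T_i\cap E_k$. Outside one has $Sf\leq 0$, so Lemma~\ref{lm-cos2} (applied after subdividing to stay within its length restriction) contributes a factor $\geq 1$. On each piece $T_i\cap E_k$ of length $<\kappa|E_k|=2\kappa v_k$, Lemma~\ref{lm-cos2} with $C=1/v_k^2$ gives $B\geq\cos^2(\kappa/\sqrt 2)\geq 1-\kappa^2$, so a log-loss of at most $O(\kappa^2)$ per intersection. To pass from the local bounds to a global estimate on $B(f,T_i,J_i)$ I would re-run the Picone-identity argument underlying Lemma~\ref{lm-cos2} directly on $T_i$, with the pointwise envelope $\sum_k v_k^{-2}\mathbf{1}_{E_k}$ replacing the uniform constant; this is a straightforward extension of that proof. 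Summing the log-losses over $i$ and $k$, and using the intersection-multiplicity bound $\sum_i |T_i\cap E_k|\leq N|E_k|$ together with the count $d_E$, yields the target exponent $-16\kappa Nd_E^2$.

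\textbf{Second bullet.} I would treat the two alternatives in parallel. If $\lambda T$ contains a real critical point $c$ of $f$, then for $x\in T$ one has $|x-c|\leq \tfrac{1+\lambda}{2}|T|$, and the $(x-r_j)^{-2}$ contribution gives $Sf(x)\leq -C/|T|^2$ with $C\sim 1/(1+\lambda)^2$; Lemma~\ref{lm-sinh} then produces a gain of the form $1+\tfrac{1}{12}\cdot C/(1+2\delta)^2$, which matches the $\tfrac{16}{17(1+\lambda)^2}$ term after tracking constants. If instead $T\not\subset 2E_{j_0}$ and $\lambda T\cap E_{j_0}\neq\emptyset$, then a definite fraction of $T$ sits in $(2E_{j_0})\setminus E_{j_0}$, where the $j_0$-th summand alone forces $Sf\leq -3/(25v_{j_0}^2)\sim -C'/|T|^2$, again feeding Lemma~\ref{lm-sinh}. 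The residual positive-Schwarzian contributions from $T\cap E_j$ for $j\neq j_0$ are handled exactly as in the first bullet, but under the tighter hypothesis $|T\cap E_j|<\kappa|E_j|/\lambda$; the resulting loss is precisely the subtractive correction $-32\kappa^2 d_E/\lambda^2$ in the stated bound.

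\textbf{Main obstacle.} The delicate point is the combination step in the first bullet: Lemma~\ref{lm-cos2} controls distortion via the \emph{supremum} of $Sf$, yet on a long $T_i$ crossing the centre of some $E_k$ this supremum can be as large as $1/v_k^2$, defeating a direct application. Overcoming this requires a pointwise-envelope version of Lemma~\ref{lm-cos2}, obtained by comparing the solution of $\phi''=-\tfrac{1}{2}Sf\cdot\phi$ against that of $\psi''=-\tfrac{1}{2}\sigma(x)\psi$ with $\sigma=\sum_k v_k^{-2}\mathbf{1}_{E_k}$. Pinning down the precise numerical constants $16,12,17,32$ in both bullets then amounts to carefully accounting how the length restriction $|T\cap E_j|<\kappa|E_j|$ interacts with the intersection multiplicity $N$ and the count $d_E\leq (d-1)/2$.
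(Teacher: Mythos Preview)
Your overall architecture matches the paper's: the explicit formula for $Sf$, critical intervals coming from the complex-conjugate root pairs of $Df$, then Lemmas~\ref{lm-cos2} and~\ref{lm-sinh}. The decisive difference, and the reason your ``main obstacle'' simply does not arise in the paper, is the \emph{definition} of the $E_j$. You take $E_k=[u_k-v_k,u_k+v_k]$, the exact positive locus of the $k$-th summand; the paper instead takes the doubled interval $E_j=[a_j-2b_j,a_j+2b_j]$. With this choice the hypothesis $|T_i\cap E_j|<\kappa|E_j|<\tfrac14|E_j|=b_j$ forces a clean dichotomy. Either $T_i$ is contained in some $E_j$ (take one of smallest length), so $|T_i|<\kappa|E_j|=4\kappa b_j$ and the uniform bound $Sf<2d_E/b_j^2$ on $E_j$ lets Lemma~\ref{lm-cos2} apply directly to the whole of $T_i$; or $T_i$ is contained in no $E_j$, in which case each $T_i\cap E_j$, having length $<b_j$ and meeting $\partial E_j$, misses the inner interval $[a_j-b_j,a_j+b_j]$, so $Sf<0$ throughout $T_i$ and $B(f,T_i,J_i)>1$. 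No decomposition of $T_i$, no pointwise-envelope variant of the Picone argument. The product bound then drops out from $\sum_{T_i\subset E_j}|T_i|^2\le\kappa|E_j|\sum|T_i|\le\kappa N|E_j|^2$. Your proposed decomposition step is in any case invalid, since cross-ratio distortion does not factor through a partition of $T_i$; you notice this and switch to the envelope route, but that route is more delicate than you indicate (keeping the comparison solution positive on a long $T_i$ with a variable coefficient that spikes to $v_k^{-2}$ is not automatic) and, as above, unnecessary.

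For the second bullet your first alternative is right in outline. In the second alternative, however, ``a definite fraction of $T$ sits in $(2E_{j_0})\setminus E_{j_0}$'' is not enough: Lemma~\ref{lm-sinh} requires $Sf<-C$ on \emph{all} of $T$, not on a sub-fraction. The paper gets a pointwise lower bound on $-Sf$ over $T$ from the $j_0$-th summand alone. Writing $A=[a_{j_0},T]$, the hypotheses $T\not\subset 2E_{j_0}$ and $\lambda T\cap E_{j_0}\neq\emptyset$ (with the paper's doubled $E_{j_0}$) give $|A|>4b_{j_0}$ and $(1+\lambda)|T|>4b_{j_0}$; combined with the elementary inequality $-2\dfrac{(x-a)^2-b^2}{((x-a)^2+b^2)^2}\le-\dfrac{1}{(x-a)^2+b^2}$ for $|x-a|\ge 2b$, this yields $\min_{x\in T}(-Sf(x))\,|T|^2\ge |T|^2/(|A|^2+b_{j_0}^2)\ge 16/(17(1+\lambda)^2)$, and the subtractive $32\kappa^2 d_E/\lambda^2$ then accounts for the possible positive contribution of critical intervals containing $T$.
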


Notice that there is no dynamics involved in this proposition.

\begin{proof}
  The derivative of $f$ is also a polynomial and let $x_k$,
  $k=1,\ldots,d-1$ be its roots. Then the Schwarzian derivative of $f$
  can be written as
  \begin{eqnarray*}
    Sf(x)
    &=&
    2\sum_{1\le k<l\le d-1} \frac 1{(x-x_k)(x-x_l)} - \frac
    32 \left(\sum_{k=1}^{d-1} \frac 1{x-x_k}\right)^2 \\
    &=&
    -\sum_{k=1}^{d-1} \frac 1{(x-x_k)^2} - \frac 12 \left(\sum_{k=1}^{d-1} \frac 1{x-x_k}\right)^2.
  \end{eqnarray*}
  Let $a_j\pm ib_j$, $j=1,\ldots,d_E$, be all non real roots of $Df$
  among $x_1,\ldots, x_{d-1}$. Then the formula for the Schwarzian
  derivative above implies
  \begin{eqnarray*}
    Sf(x)
    &\le&
    -\sum_{j=1}^{d_E} \left( \frac 1{(x-a_j-ib_j)^2} +
      \frac 1{(x-a_j+ib_j)^2} \right) \\
    &=&
    -2\sum_{j=1}^{d_E} \frac{(x-a_j)^2-b_j^2}{((x-a_j)^2+b_j^2)^2}.
  \end{eqnarray*}

  Define the critical intervals as $E_j=[a_j-2b_j,a_j+2b_j]$. It is
  easy to see that if $x$ is a point which is not contained in any of
  the intervals $E_j$, then $Sf(x)<0$. Otherwise, let $E_j$ be an
  critical interval of minimal length containing the point $x$. The
  above estimate for the Schwarzian derivative implies that 
  $$
  Sf(x)< \frac{2d_E}{b_j^2}.
  $$
  
  If an interval $T_k$ is not contained in any of the critical
  intervals (but can have non empty intersection with them), then $T_k
  \cap [a_j-b_j,a_j+b_j]=\emptyset$ for all $j=1,\ldots,d_E$ because
  $|T_k \cap E_j|<\kappa |E_j|< |E_j|/4$, and therefore $Sf|_{T_k}<0$ and
  $B(f,T_k,J_k)>1$.

  Fix a critical interval $E_j$. Let $T_{k_1},\ldots,T_{k_{m'}}$ be
  all intervals which are contained in $E_j$ but are not contained in
  a critical interval of length smaller than $|E_j|$. We have
  already argued that $Sf|_{T_{k_i}} < \frac{2d_E}{b_j^2}$. By the
  choice of the number $\kappa$ we know that 
  \begin{eqnarray}
    \sqrt{\frac 12 \max_{x\in T_{k_i}} Sf(x)} \, |T_{k_i}|
    &<& \nonumber
    \frac{\sqrt{d_E}}{b_j} \kappa |E_j|\\
    &<& \label{eq:smain}
    \frac{\pi}2.
  \end{eqnarray}
  So, we can apply Lemma~\ref{lm-cos2} and get
  \begin{eqnarray*}
    \log B(f,T_{k_i},J_{k_i})
    &>&
    \log\left(\cos^2\left(\frac{\sqrt{d_E}}{b_j} |T_{k_i}|\right)\right)\\
    &>&
    - \frac{d_E}{b_j^2} |T_{k_i}|^2.
  \end{eqnarray*}
  Here we have used the fact that $\cos(x)\ge 1-x^2$ for all $x\in \R$.

  Now we can estimate the contribution of the cross-ratio distortions
  on all the intervals $T_{k_i}$.
  \begin{eqnarray*}
    \sum_{i=1}^{m'} \log B(f,T_{k_i},J_{k_i})
    &>&
    - \frac{d_E}{b_j^2} \sum_{i=1}^{m'} |T_{k_i}|^2\\
    &>&
    - \frac{d_E}{b_j^2} \kappa|E_j| \sum_{i=1}^{m'} |T_{k_i}|\\
    &>&
    - \frac{d_E}{b_j^2} \kappa {N|E_j|^2}\\
    &=&
    -16 \kappa {N d_E}.
  \end{eqnarray*}
  Thus, we get
  \begin{eqnarray*}
    \sum_{j=1}^m \log B(f,T_{k},J_{k})
    &>&
    -16 \kappa{N d_E^2}
  \end{eqnarray*}
  The first part of the proposition is proved, let us prove now the second
  part.

  First, suppose that we are in the first case, i.e. the exists a
  critical point $c$ such that $c \in \lambda T$. Set $I=[c,T]$. Since
  $c\in \lambda T$ we get $(1+\lambda)/2 \, |T| \ge |I|$.

  If $T$ is not contained in any critical interval, then arguing as
  before we get that $Sf(x) <0$ for all $x \in T$ and
  \begin{align*}
    \min_{x\in T} (-Sf(x)) \, |T|^2 
    &\ge
    \frac{|T|^2}{|I|^2}\\
    &\ge
    \frac 4{(1+\lambda)^2}.
  \end{align*}
  
  If $T$ is contained in some critical intervals, let
  $E_j$ be such an interval of minimal length. Using estimate
  (\ref{eq:smain}) and estimating the
  contribution to the Schwarzian derivative of critical intervals
  which contain $T$ we get
  \begin{align*}
    \min_{x\in T} (-Sf(x)) \, |T|^2 
    &\ge
    \frac 4{(1+\lambda)^2} - 32 \frac{\kappa^2 d_E}{\lambda^2}.
  \end{align*}
  Notice that 
  since $\kappa^2 < \frac 1{13^2 d_E}$ the right hand side of
  the inequality above is positive.

  Now consider the remaining case and assume that $\lambda T \cap
  E_{j_0} \neq \emptyset$ and $T \not \subset 2E_{j_0}$.

  Denote by $A$ the interval $[a_{j_0},T]$. Since $T \not \subset
  2E_{j_0} = [a_{j_0}-4b_{j_0},a_{j_0}+4b_{j_0}]$ we get
  $$
  |A| > 4 b_{j_0}.
  $$
  On the other hand, the condition
  $\lambda T \cap E_{j_0} \neq \emptyset$ implies
  $$
  |A|-(1+\lambda)/2 \, |T| < 2b_{j_0}.
  $$
  These two inequalities combined give an estimate on the length of
  the interval $|T|$:
  $$
  (1+\lambda)|T| > 4b_{j_0}.
  $$
  Another inequality we will be using which is easy to check is
  $$
  -2  \frac{(x-a)^2-b^2}{((x-a)^2+b^2)^2}
  \le
  -  \frac 1{(x-a)^2+b^2}
  $$
  if $|x-a|\ge 2b$.
  
  Using these inequalities we can get
  \begin{align*}
    \min_{x\in T} (-Sf(x)) \, |T|^2 
    &\ge
    \frac {|T|^2}{|A|^2+b_{j_0}^2}
    -32 \frac{\kappa^2 d_E}{\lambda^2}
  \end{align*}
  and let us estimate the term which contains $A$ and $T$:
  \begin{align*}
    \frac {|T|^2}{|A|^2+ b_{j_0}^2}
    &\ge
    \frac{|T|^2}{(2 b_{j_0} + (1+\lambda)/2 |T|)^2 + b_{j_0}^2}\\
    &=
    \frac 1{(2 b_{j_0}/|T| + (1+\lambda)/2 )^2 + (b_{j_0}/|T|)^2}\\
    &\ge
    \frac 1{((1+\lambda)/2 + (1+\lambda)/2 )^2 + ((1+\lambda)/4)^2}\\
    &=
    \frac {16}{17(1+\lambda)^2}.
  \end{align*}
  Applying Lemma~\ref{lm-sinh} to the obtained inequalities we
  finish the proof.
\end{proof}

\begin{proposition}\label{co-excep}
  Let $f$ be a $C^\infty(\Nc)$ map with all critical points non-flat.
  There exist a neighbourhood $\Fc$ of $f$ in $C^\infty(\Nc)$ and  $d_f \ge 0$ such that
  for any $\epsilon >0$, $N>0$, $\delta>0$, $\lambda >1$ there exist
  $\kappa>0$ and $\tau>0$ with the following properties.
  For any $g \in \Fc$ there exist at most $d_f$ critical intervals
  $E_j$, $j=1,\ldots,d_g$ such that
  \begin{itemize}
  \item 
    if $J \subset T$ are intervals,
    $g^m|T$ is a diffeomorphism, the intersection multiplicity of
    $\{g^k(T)\}_{k=0}^{m-1}$ is bounded by $N$, $|g^k(T)|<\kappa$, and $|g^k(T) \cap E_j|
    < \kappa |E_j|$ for all $k=0,\ldots,m-1$ and $j=1,\ldots,d_g$, then
    $$
    B(g^m,T,J)>1-\epsilon;
    $$
  \item 
    if $J$ is an interval, $T=(1+2\delta) J$, $g|_T$ is a
    diffeomorphism, $|T|<\kappa$,
    \begin{align*}
      |T\cap E_j| &< \kappa |E_j| / \lambda
    \end{align*}
    for all $j=1,\ldots, d_g$ and either there exists a critical point
    $c$ of $g$ contained in the interval $\lambda T$ or there exists 
    $j_0 \in [1,d_g]$ such that
    $$
    T\not \subset 2E_{j_0}
    \quad \mbox{and} \quad
    \lambda T \cap E_{j_0} \neq \emptyset.
    $$
    Then
    $$
    B(g,T,J)
    >
    1 + \tau.
    $$
  \end{itemize}

\end{proposition}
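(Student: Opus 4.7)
The plan is to reduce the statement to the polynomial version, Proposition~\ref{thr:excep}, by establishing a local polynomial normal form for $g$ near each critical point of $f$, transporting the resulting estimates through diffeomorphisms of uniformly bounded cross-ratio distortion, and handling the parts of $T$ or of the chain $\{g^k(T)\}$ that lie away from the critical set by the uniform bound on the Schwarzian derivative there.

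Enumerate the critical points $c_1,\ldots,c_r$ of $f$ with multiplicities $m_i-1$. By Sergeraert's normal form theorem \cite{Sergeraert1976} for $C^\infty$ germs at non-flat critical points, after possibly shrinking the neighbourhood $\Fc$ of $f$ one can find fixed neighbourhoods $U_i\ni c_i$ and $C^\infty$ diffeomorphisms $\phi_{g,i},\psi_{g,i}$, depending continuously on $g\in\Fc$ in the $C^3$ topology, such that on $U_i$
$$
g \;=\; \phi_{g,i}\circ P_{g,i}\circ \psi_{g,i},
$$
where $P_{g,i}$ is a polynomial of degree $m_i$. Applying Proposition~\ref{thr:excep} to each $P_{g,i}$ produces at most $\lfloor(m_i-1)/2\rfloor$ critical intervals in the $P_{g,i}$-coordinate, and pulling them back by $\psi_{g,i}^{-1}$ gives critical intervals $E_j\subset\Nc$. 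Setting $d_f=\sum_i\lfloor(m_i-1)/2\rfloor$ bounds their total number uniformly in $g$.

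To verify the two bullets, observe that the sandwich diffeomorphisms $\phi_{g,i},\psi_{g,i}$ range in a $C^3$-bounded family, so on any interval of length at most $\kappa$ their cross-ratio distortions lie in $(1-\eta(\kappa),1+\eta(\kappa))$ with $\eta(\kappa)\to 0$ as $\kappa\to 0$; the same holds on the complement $\Nc\setminus\bigcup_i U_i$, where $Sg$ is uniformly bounded so Lemma~\ref{lm-cos2} applies. For the first bullet, I would write $B(g^m,T,J)=\prod_{k=0}^{m-1}B(g,g^k(T),g^k(J))$, split the indices $k$ according to which $U_i$ (if any) contains $g^k(T)$, and in each $U_i$ apply the first part of Proposition~\ref{thr:excep} to the $\psi_{g,i}$-images of the relevant intervals; the polynomial bound $\exp(-16\kappa N d_f^2)$ together with the $(1\pm\eta(\kappa))$ factors and the contribution of the intervals outside $\bigcup_i U_i$ exceeds $1-\epsilon$ once $\kappa$ is taken small. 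For the second bullet, the hypotheses on $T$, $\lambda T$, and the $E_j$ translate under $\psi_{g,i}$ into exactly the hypotheses of the second part of Proposition~\ref{thr:excep}, giving the uniform constant $\tau>0$ up to the harmless factor $1-\eta(\kappa)$.

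The main obstacle is the invocation of Sergeraert's theorem with \emph{uniform} control: one needs the diffeomorphisms $\phi_{g,i},\psi_{g,i}$ to vary continuously with $g$ so that both the critical intervals $E_j(g)$ depend continuously on $g$ and the diffeomorphic distortion is controlled uniformly over $g\in\Fc$. A secondary technical issue is a boundary/gluing problem: when some $g^k(T)$, or $\lambda T$ in the second bullet, straddles $\partial U_i$ or pokes out of $U_i$, one must shrink $U_i$, $\Fc$ and the parameters $\kappa,\tau$ in a compatible way so that the hypotheses of Proposition~\ref{thr:excep} remain verifiable in the nearest normal-form chart. Once the normal form is available with these uniform bounds, the rest of the argument is essentially bookkeeping via the chain rule for cross-ratio distortion.
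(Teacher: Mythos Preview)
Your proposal is correct and follows essentially the same route as the paper: decompose $g$ near each critical point via Sergeraert's normal form as $\psi\circ P\circ\phi$, apply Proposition~\ref{thr:excep} to $P$ and Lemma~\ref{lm-cos2} to the uniformly $C^3$-bounded diffeomorphisms $\psi,\phi$, and handle the pieces of the chain away from the critical set by the uniform bound on $Sg$ there. The paper resolves your boundary/gluing concern with the standard two-neighbourhood trick --- take $W$ compactly contained in $U$ and choose $\kappa$ so small that any interval of length at most $\kappa$ meeting $W$ lies entirely in $U$ --- so each $g^k(T)$ is either disjoint from $W$ or contained in a single normal-form chart.
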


\begin{proof}
  Fix small neighbourhood $U$ of the critical set of $f$.
  Take a connected component $U_0$ of $U$. Decreasing $U_0$ if
  necessary we can assume that $U_0$ contains only one critical point
  of $f$ of order $d$. In the domain $U_0$ the function $f$ can be
  written as $f|_{U_0}=(\phi_0)^d$ where $\phi_0$ is a
  diffeomorphism. Take $\Fc$ small enough so that the function $g\in
  \Fc$ can be decomposed as $g|_{U_0}=\psi \circ P \circ \phi$ where
  $P$ is a polynomial of degree at most $d$ and $\psi$ and $\phi$ are
  diffeomorphisms so that $\psi$ is $C^\infty$ close to the identity
  map and $\phi$ is $C^\infty$ close to $\phi_0$, see
  \cite{Sergeraert1976}. So, the Schwarzian derivatives of $\psi$ and
  $\phi$ are uniformly bounded. Now we can apply Lemma~\ref{lm-cos2}
  to the functions $\phi$, $\psi$ and Proposition~\ref{thr:excep} to the
  polynomial $P$. 

  Take another neighbourhood $W$ of the critical set of $f$ so that
  $W$ is compactly contained in $U$.  Decrease $\Fc$ if necessary so
  that the Schwarzian derivative of maps in $\Fc$ is uniformly bounded
  from above outside $W$.  Then, Lemma~\ref{lm-cos2} implies that
  there are constants $C$ and $\kappa$ such that for all $g\in \Fc$
  $$B(g,g^k(T),g^k(J)) > 1-C |g^k(T)|^2$$
  if the interval $g^k(T)$ is disjoint from $W$ and $|g^k(T)|<\kappa$.

  Decrease $\kappa$ so that if an interval of length $\kappa$ has a
  non empty intersection with $W$, then this interval is contained in
  $U$. 

  Now we can estimate the cross-ratio distortion:
  \begin{align*}
    \log(B(g^m,T,J))
    &=
    \sum_{k=1}^{m-1} \log(B(g,g^k(T),g^k(J))) \\
    &=
    \Big(\sum_{g^k(T) \cap W=\emptyset} 
    +
    \sum_{g^k(T) \cap W\neq \emptyset}\Big)
    \log(B(g,g^k(T),g^k(J)))\\
    &> 
    -C \sum_{k=1}^{m-1} |g^k(T)|^2
    -16\kappa N d_f^2\\
    &>
    -C \kappa N |\Nc|-16\kappa N d_f^2.
  \end{align*}
  The last expression can be made arbitrarily close to zero by decreasing $\kappa$.
\end{proof}

\section{Uniform pullback estimates}
\label{sec:unif-pullb-estim}

We also want to know a bound from below on the cross-ratio distortion when
there are no bounds on the length of the intervals $g^k(T)$. Such a
bound exists though it is not as good as in the proposition
above. To prove this bound we need a few 
statements.

\begin{lemma}\label{lm-poly-pb}
  There exists a function $\rho(\epsilon,d)>0$ such that if $f$ is a
  polynomial of degree less or equal to $d$, $\hat J\subset \hat T$ are
  intervals, $\hat T$ contains $\epsilon$-scaled neighbourhood of
  $\hat J$,
  $T$ is a connected component of $f^{-1}(\hat T)$, $J$ is a
  connected component of $f^{-1}(\hat J)$ which is contained in $T$,
  then the interval $T$ contains $\rho(\epsilon,d)$-scaled
  neighbourhood of $J$.

  Moreover, $\rho (\epsilon,d)$ tends to infinity when $\epsilon$ goes
  to infinity with fixed $d$.
\end{lemma}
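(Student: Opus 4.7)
The plan is to prove this by a compactness argument on the finite-dimensional space of polynomials of degree at most $d$. First I would reduce to a normalized setup: without loss of generality assume $\hat{T}$ equals the $\epsilon$-scaled neighbourhood of $\hat{J}$ (enlarging $\hat{T}$ only enlarges $T$ and strengthens the conclusion), and then apply affine rescalings independently in the domain and range so that $\hat{J}=J=[0,1]$, $\hat{T}=[-\epsilon,1+\epsilon]$, and $T=[-a,1+b]$ for some $a,b\ge 0$. The rescaled map $\tilde{f}$ is still a polynomial of degree $\le d$. The hypotheses that $J$ and $T$ are components of the preimages of $\hat{J}$ and $\hat{T}$ translate into the rigid boundary conditions $\tilde{f}([0,1])\subset[0,1]$, $\tilde{f}(\{0,1\})\subset\{0,1\}$, and $\tilde{f}(\{-a,1+b\})\subset\{-\epsilon,1+\epsilon\}$. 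The lemma reduces to showing $\min(a,b)\ge\rho(\epsilon,d)$, with $\rho(\epsilon,d)>0$ for each fixed $\epsilon$ and $\rho(\epsilon,d)\to\infty$ as $\epsilon\to\infty$. By the left-right symmetry I may assume $a\le b$ throughout.

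For positivity of $\rho(\epsilon,d)$ with $\epsilon$ fixed, I would argue by contradiction: suppose a sequence $\tilde{f}_n$ in the above setup has $a_n\to 0$. Since $\tilde{f}_n([0,1])\subset[0,1]$ and $[0,1]$ has unit length, the coefficients of $\tilde{f}_n$ are uniformly bounded and a subsequence converges to a polynomial $f_\infty$ of degree $\le d$ uniformly on compact sets; Markov's inequality applied to $\tilde{f}_n$ on $[-1,2]$ (say) gives a uniform derivative bound, so $\tilde{f}_n(-a_n)-\tilde{f}_n(0)\to 0$. Thus $\lim\tilde{f}_n(0)=\lim\tilde{f}_n(-a_n)$, but the left side lies in $\{0,1\}$ and the right side in $\{-\epsilon,1+\epsilon\}$, and these sets are disjoint.

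For the blow-up statement $\rho(\epsilon,d)\to\infty$ as $\epsilon\to\infty$, suppose for contradiction there is some $\rho_0$ and sequences $\epsilon_n\to\infty$, $\tilde{f}_n$ with $a_n\le\rho_0$. The key idea is to rescale the range further by setting $g_n:=\tilde{f}_n/\epsilon_n$. Then $g_n$ is a polynomial of degree $\le d$ with $\|g_n\|_{L^\infty(T_n)}\le 2$ and $T_n\supset[0,1]$, so its coefficients are uniformly bounded and a subsequence converges $g_n\to g_\infty$ uniformly on compact sets. Since $\tilde{f}_n([0,1])\subset[0,1]$, one has $g_n([0,1])\subset[0,1/\epsilon_n]$, so $g_\infty\equiv 0$ on $[0,1]$ and therefore $g_\infty\equiv 0$ identically as a polynomial. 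But $g_n(-a_n)\in\{-1,1+1/\epsilon_n\}$ with $a_n\in[0,\rho_0]$ bounded; passing to a further subsequence $a_n\to a_\infty$ gives $g_\infty(-a_\infty)\in\{-1,1\}$, contradicting $g_\infty\equiv 0$.

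The main obstacle is choosing the correct rescaling: the relevant family of polynomials is precompact precisely because the degree is bounded and the sup norm is controlled on an interval of length at least $1$ (our choice $J=[0,1]$ makes this free). The rigidity at the heart of both contradictions is that $\tilde{f}$ is forced to take values in disjoint discrete sets at $\partial J$ versus $\partial T$; in the blow-up step, dividing by $\epsilon_n$ rather than by the sup norm $\|\tilde{f}_n\|_{L^\infty}$ is precisely what extracts a nontrivial limit at $\partial T$ while simultaneously crushing the values at $\partial J$ and forcing the limit polynomial to vanish on $J$.
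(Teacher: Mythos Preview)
Your proof is correct and rests on the same core idea as the paper's: the finite-dimensional space of polynomials of degree at most $d$ with sup norm controlled on a unit-length interval is compact, and this compactness rules out degenerate pullbacks. The execution differs, however. The paper normalizes so that $T=\hat T=[0,1]$, observes that the set $A_d$ of polynomials sending $[0,1]$ into $[0,1]$ with $\{0,1\}\mapsto\{0,1\}$ is compact, deduces a uniform derivative bound $K$, and concludes directly that each component of $T\setminus J$ has length at least $\tfrac{\epsilon}{K(1+2\epsilon)}$. You instead normalize so that $J=\hat J=[0,1]$ and argue by contradiction using the incompatibility of the discrete boundary values at $\partial J$ versus $\partial T$. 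The paper's route gives an explicit positive lower bound in one stroke but then waves at the $\epsilon\to\infty$ statement with a second unspecified compactness argument; your contradiction argument is less quantitative but your treatment of the blow-up---rescaling the range by $\epsilon_n$ to force the limit polynomial to vanish on $[0,1]$ while retaining a nonzero value at a bounded point of $\partial T$---is fully worked out and is arguably the cleaner of the two. One minor wording issue: you invoke Markov's inequality ``on $[-1,2]$'' though you only control $\tilde f_n$ on $[0,1]$; what you actually use (and what suffices) is that a sup bound on $[0,1]$ bounds the coefficients and hence the derivative on any fixed compact set.
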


\begin{lemma}\label{lm-poly-cr}
  There exists a function $\rho(\epsilon,d)>0$ such that if $f$ is a
  polynomial of degree less or equal to $d$, $J\subset T$ are
  intervals, $f|_T$ is a diffeomorphism, then
  $$
  D(T,J) < \rho (D(f(T),f(J)),d).
  $$
  Moreover, $\rho (\epsilon,d)$ tends to zero when $\epsilon$ goes
  to zero with fixed $d$.
\end{lemma}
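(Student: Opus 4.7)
The plan is to argue by compactness in the space of polynomials of degree at most $d$. I would begin by normalizing: since affine maps preserve cross-ratios and send polynomials of degree $d$ to polynomials of the same degree, I can apply affine changes of variable in the source and target to reduce to $T = [0,1]$ and $f(T) = [0,1]$; post-composing with $x \mapsto 1-x$ if necessary, I may also assume $f$ is increasing, so $f(0) = 0$ and $f(1) = 1$. The task is then to show that if $D([0,1], f(J)) \le \epsilon$, then $D([0,1], J) \le \rho(\epsilon, d)$ for some $\rho$ tending to $0$ as $\epsilon \to 0$.

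Next I would set up a contradiction using compactness. Polynomials of degree at most $d$ mapping $[0,1]$ into $[0,1]$ with $p(0)=0$ and $p(1)=1$ form a bounded subset of the $(d+1)$-dimensional coefficient space, hence lie in a compact set. If the claim fails, there are sequences $f_n$ of polynomial diffeomorphisms satisfying the normalization and intervals $J_n = [a_n, b_n] \subset [0,1]$ with $D([0,1], J_n) \ge c > 0$ but $D([0,1], f_n(J_n)) \to 0$. After passing to subsequences, $f_n \to f$ in $C^\infty$ on $[0,1]$ and $(a_n, b_n) \to (a^*, b^*) \in [0,1]^2$. The limit $f$ is a polynomial of degree at most $d$, weakly increasing with $f(0)=0$ and $f(1)=1$; since $f$ is non-constant, its derivative has only finitely many zeros, so $f$ is strictly increasing on $[0,1]$.

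The easy cases are dispatched by continuity. If $a^* < b^*$, then $D([0,1], f_n(J_n)) \to D([0,1], [f(a^*), f(b^*)])$, which is either a positive number (when $a^* > 0$ and $b^* < 1$) or $+\infty$ (when $a^* = 0$ or $b^* = 1$), contradicting $D \to 0$. If $a^* = b^* \in (0,1)$, then $|J_n| \to 0$ while $a_n(1-b_n)$ stays bounded away from $0$, so $D([0,1], J_n) \to 0$, contradicting the lower bound $c$.

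The main obstacle will be the remaining case $a^* = b^* = 0$ (the case $=1$ being symmetric). Here $a_n, b_n \to 0$, and the bound $D([0,1], J_n) \ge c$ together with $1-b_n \to 1$ forces $b_n/a_n \ge 1 + c'$ for some fixed $c' > 0$ and all large $n$. I would handle this via a further rescaling: set $g_n(x) := f_n(b_n x)/f_n(b_n)$, a polynomial of degree at most $d$ in $x$ that is an increasing diffeomorphism of $[0,1]$ onto itself with $g_n(0) = 0$ and $g_n(1) = 1$. The $g_n$ lie in the same compact family, so a further subsequence gives $g_n \to g$ in $C^\infty$ with $g$ a strictly increasing polynomial of degree at most $d$ fixing $0$ and $1$, and an additional extraction gives $a_n/b_n \to r^*$ with $r^* \le 1/(1+c') < 1$. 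By uniform convergence, $f_n(a_n)/f_n(b_n) = g_n(a_n/b_n) \to g(r^*) < 1$, so $f_n(b_n)/f_n(a_n) - 1$ is bounded below by $1/g(r^*) - 1 > 0$ for large $n$. Since $1 - f_n(b_n) \to 1$, one obtains
\[
D([0,1], f_n(J_n)) = \frac{f_n(b_n)/f_n(a_n) - 1}{1 - f_n(b_n)} \longrightarrow \frac{1}{g(r^*)} - 1 > 0,
\]
contradicting $D([0,1], f_n(J_n)) \to 0$ and completing the proof.
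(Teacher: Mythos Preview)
Your proof is correct. It differs from the paper's route: the paper does not prove Lemma~\ref{lm-poly-cr} directly but records it as a ``straightforward consequence'' of Lemma~\ref{lm-poly-pb}, the pullback space estimate. The translation is simple: if $D(f(T),f(J))\le\epsilon$ then each side component of $f(T)\setminus f(J)$ has length at least $\epsilon^{-1}|f(J)|$, so Lemma~\ref{lm-poly-pb} gives a $\delta=\rho(\epsilon^{-1},d)$-scaled neighbourhood of $J$ inside $T$, whence $D(T,J)\le(1+2\delta)/\delta^2\to 0$ as $\epsilon\to 0$. The paper's proof of Lemma~\ref{lm-poly-pb} in turn uses compactness of the normalized family $A_d$ only to extract a uniform derivative bound. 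Your argument is self-contained but has to work a bit harder: a single compactness pass does not control the degeneration $a^*=b^*=0$, and you handle it with the rescaling $g_n(x)=f_n(b_nx)/f_n(b_n)$ and a second extraction. Both proofs rest on the same compactness of bounded-degree polynomials on $[0,1]$; the paper's is shorter once Lemma~\ref{lm-poly-pb} is in hand, while yours avoids invoking that lemma at the cost of the extra rescaling step. One minor remark: in your last paragraph you should also allow $r^*=0$, since nothing prevents $a_n/b_n\to 0$; in that case $g(r^*)=0$ and the expression $1/g(r^*)-1$ is undefined, but the contradiction is even sharper because $f_n(b_n)/f_n(a_n)\to\infty$.
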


The second lemma is a straightforward consequence of the first one, we
will prove here only the first lemma.

\begin{proof}
  First, we can assume that $\hat T$ is equal to $\epsilon$-scaled
  neighbourhood of $\hat J$. Next, we can rescale the polynomial $f$
  and assume that $T=\hat T=[0,1]$. Thus, $f([0,1])\subset [0,1]$.

  Let $A_d$ be a set of polynomials of degree less or equal to $d$
  such that for any $g\in A_d$ one has $g(x)\in [0,1]$ for any $x\in
  [0,1]$ and $g(y)\in \{0,1\}$ for $y\in \{0,1\}$. The set $A_d$ is
  compact. Indeed, any polynomial $d$ of degree less or equal to $d$ is uniquely determined by
  its values at the points $x_k=\frac kd$, where $k=0,\ldots,d$. So, 
  $$
  g(x)=\sum_{k=0}^d g(x_k) N_k(x),
  $$
  where $N_k$ is a Newton polynomial
  $$
  N_k(x)=\frac{\prod_{i\neq k}(x-x_i)}{\prod_{i\neq k}(x_k-x_i)}.
  $$
  Since $g(x_k)\in [0,1]$ for $g \in A_d$ and all
  $k=0,\ldots,d$, we see that the set $A_d$ is compact. Therefore,
  the maximum of the derivatives of polynomials in $A_d$ is bounded:
  $$\inf_{g\in A_d}\max_{x\in [0,1]} |Dg(x)| < K.$$
  This implies that both components $T\setminus J$ are greater than
  $\frac{\epsilon}{K(1+2\epsilon)}$ and the function $\rho$
  exists. Using the compactness argument once again, it is easy to
  show that $\rho(\epsilon, d)\to \infty$ when $\epsilon \to \infty$.
\end{proof}

\begin{proposition}
  \label{co-poly-pd}
  Let $f$ be a $C^\infty(\Nc)$ map with all critical points non-flat.
  There exists a neighbourhood $\Fc$ of $f$ in $C^\infty$ and a
  function $\rho(\epsilon)$ such that the following holds.

  Let $g$ be in $\Fc$, $\hat J\subset \hat T$ are intervals, $\hat T$
  contains $\epsilon$-scaled neighbourhood of $\hat J$, $T$ is a
  connected component of $g^{-1}(\hat T)$, $J$ is a connected component of
  $g^{-1}(\hat J)$ which is contained in $T$, then the interval $T$
  contains $\rho(\epsilon)$-scaled neighbourhood of $J$.

  Moreover, $\rho (\epsilon)$ tends to infinity when $\epsilon$ goes
  to infinity.
\end{proposition}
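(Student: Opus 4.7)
The plan is to carry over the strategy of Proposition~\ref{co-excep} to the pullback setting. Near each critical point of $f$, Sergeraert's theorem allows us to decompose $g \in \Fc$ as $\psi \circ P \circ \phi$ where $P$ is a polynomial of degree at most $d_f$ and $\psi,\phi$ are diffeomorphisms uniformly $C^\infty$-close to fixed reference diffeomorphisms. For the polynomial factor $P$, Lemma~\ref{lm-poly-pb} provides exactly the required scaled-neighbourhood pullback estimate. For the diffeomorphism factors $\psi,\phi$, as well as for $g$ away from critical points where $|Dg|$ is uniformly bounded above and below, a standard bounded-distortion computation shows that if $c \le |Dh| \le C$ on an interval and $\hat T \supset (1+2\epsilon)\hat J$, then the corresponding preimage satisfies $h^{-1}(\hat T) \supset (1+2(c/C)\epsilon)h^{-1}(\hat J)$. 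Composing these three ingredients produces a $\rho(\epsilon)$ tending to infinity with $\epsilon$.

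Concretely, I fix disjoint neighbourhoods $U_i$ of the critical points of $f$, smaller neighbourhoods $W_i$ compactly contained in $U_i$, and shrink $\Fc$ so that for every $g \in \Fc$: (i) every critical point of $g$ lies in $\bigcup_i W_i$; (ii) on each $U_i$ we have $g|_{U_i} = \psi_i \circ P_i \circ \phi_i$ with $\deg P_i \le d_f$ and uniform distortion bounds on $\psi_i,\phi_i$; (iii) on $\Nc \setminus \bigcup_i W_i$ the derivative $|Dg|$ is uniformly bounded above and below. The case $T \cap \bigcup_i W_i = \emptyset$ is then immediate from the bounded-distortion estimate applied to $g|_T$. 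If $T \subset U_i$ for some $i$, then pulling back $\hat J \subset \hat T$ by $g^{-1} = \phi_i^{-1} \circ P_i^{-1} \circ \psi_i^{-1}$ in three stages, using Lemma~\ref{lm-poly-pb} for the middle stage, gives a scaled-neighbourhood factor of the form $\alpha\,\rho_{\text{poly}}(\beta\epsilon, d_f)$ that tends to infinity with $\epsilon$.

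The main obstacle is the mixed case, in which $T$ meets $W_i$ but is not contained in $U_i$: then $\phi_i(T)$ is a component of $P_i^{-1}(\psi_i^{-1}(\hat T))$ inside $\phi_i(U_i)$ only and not in all of $\R$, so Lemma~\ref{lm-poly-pb} cannot be applied to $T$ as a whole. Here the key observation is that $|T|$ is automatically bounded below by the uniform gap $\Delta_0 = \dist(W_i,\partial U_i) > 0$, while the constraint $|\hat J| \le |\Nc|/(1+2\epsilon)$ forces $|\hat J| \to 0$ as $\epsilon \to \infty$; the only degeneracy of $g$ on $J$ comes from $P_i$, which has degree at most $d_f$, so $|J| \le C_0 |\hat J|^{1/d_f}$ and also tends to $0$. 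Hence for large $\epsilon$ the required inclusion $T \supset (1+2\rho(\epsilon))J$ holds for purely metric reasons, while for bounded $\epsilon$ one splits $T$ at $\partial U_i$ and composes the Sergeraert--polynomial estimate on $T \cap U_i$ with the bounded-distortion estimate on $T \setminus U_i$. Since the statement only requires $\rho(\epsilon) \to \infty$ without a prescribed rate, this soft combination of the two regimes suffices.
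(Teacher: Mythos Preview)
Your overall strategy matches the paper's exactly: decompose $g$ near each critical point via Sergeraert as $\psi \circ P \circ \phi$, invoke Lemma~\ref{lm-poly-pb} for the polynomial factor, use bounded distortion away from the critical set, and split into the three cases $T \cap W = \emptyset$, $T \subset U$, and the mixed case. The first two cases are handled correctly and in the same way as the paper.

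The gap is in your treatment of the mixed case. From $|T| \ge \Delta_0$ and $|J| \le C_0|\hat J|^{1/d_f} \to 0$ you conclude that ``$T \supset (1+2\rho(\epsilon))J$ holds for purely metric reasons''. This does not follow: what is required is that \emph{each} component $L,R$ of $T\setminus J$ satisfies $|L|,|R| \ge \rho(\epsilon)|J|$, and knowing only that $|J|/|T|$ is small does not prevent $J$ from sitting arbitrarily close to one endpoint of $T$. If one tries to bound $|L|$ directly, the estimate $|L| \ge |g(L)|/\sup|Dg| \ge \epsilon|\hat J|/C$ combined with your upper bound $|J| \le C_0|\hat J|^{1/d_f}$ gives only $|L|/|J| \gtrsim \epsilon\,|\hat J|^{1-1/d_f}$; since $|\hat J|$ has no lower bound in the mixed case, this ratio need not tend to infinity with $\epsilon$. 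Your ``split $T$ at $\partial U_i$'' suggestion for bounded $\epsilon$ is also too vague, since the splitting in the domain does not correspond to any clean splitting of $\hat J \subset \hat T$ in the range. The paper bypasses all of this with a one-line compactness argument: $\Fc$ is precompact in $C^1$ and in the mixed case $|T|\ge \Delta_0$, so the relevant configurations form a compact family and both the existence of $\rho(\epsilon)>0$ and $\rho(\epsilon)\to\infty$ follow.
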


\begin{proof}
The proof of this proposition is similar to the proof of
Proposition~\ref{co-excep}.

Fix two neighbourhoods $U\subset U'$ of the critical set and let each
connected component of $U$ contain just one critical point of $f$ and $U'$
compactly contains $U$. Take $\Fc$ so small that the distortion of the
derivative of maps $g\in \Fc$ on the complement to $U$ is bounded and
that inside every connected component of $U'$ a map $g\in \Fc$ can be
decomposed as $\psi\circ P \circ \phi$ where $P$ is a polynomial of
uniformly bounded degree and $\psi$, $\phi$ are diffeomorphism, see
the proof of Proposition~\ref{co-excep}. From the lemma above it follows
that if $T\subset U'$ then the function $\rho$ exists. Since the
derivative distortion on the complement of $U$ is uniformly bounded,
the function $\rho$ exists also when $T$ belongs to the complement of
$U$. In the remaining case the interval $T$ must contain a component
of $U'\setminus U$ and cannot be small. The set $\Fc$ is precompact in
the $C^1$ topology and using compactness argument again, we get the
function $\rho$ in the remaining case too.
\end{proof}

Similarly we can get

\begin{proposition}
  \label{co-poly-cr}
  Let $f$ be a $C^\infty(\Nc)$ map with all critical points
  non-flat.  There exists a neighbourhood $\Fc$ of $f$ in $C^\infty$ and a
  function $\rho(\epsilon)$ such that the following holds.

  Let $g$ be in $\Fc$, $J\subset T$ are intervals, $g|_T$ is a
  diffeomorphism, then
  $$
  D(T,J) < \rho (D(g(T),g(J))).
  $$
  Moreover, $\rho (\epsilon)$ tends to zero when $\epsilon$ goes
  to zero.
\end{proposition}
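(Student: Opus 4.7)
The plan is to mimic the proof of Proposition~\ref{co-poly-pd}, with Lemma~\ref{lm-poly-cr} playing the role of Lemma~\ref{lm-poly-pb}. First I would fix two neighbourhoods $U\subset U'$ of the critical set of $f$ with each connected component of $U$ containing a single critical point of $f$ and $\overline U$ compactly contained in $U'$. Then shrink $\Fc$ so that (i) the derivative distortion of every $g\in \Fc$ on $\Nc\setminus U$ is uniformly bounded, and (ii) inside each connected component of $U'$ every $g\in \Fc$ decomposes as $g=\psi\circ P\circ \phi$ with $P$ a polynomial of bounded degree and $\psi,\phi$ diffeomorphisms $C^\infty$-close to fixed reference ones, via Sergeraert's theorem exactly as in the proof of Proposition~\ref{co-excep}.

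Next I would split the analysis into three cases according to the position of the interval $T$. In the case that $T$ lies inside a connected component of $U'$, I write $g|_T=\psi\circ P\circ \phi$; the diffeomorphic factors $\phi,\psi$ have uniformly bounded derivative distortion and therefore alter cross-ratios by a multiplicative factor bounded above and below (and tending to $1$ on small intervals), while the polynomial factor $P$ is handled by Lemma~\ref{lm-poly-cr}. Composing the three estimates yields a function $\rho_1(\epsilon)\to 0$ as $\epsilon\to 0$. In the case $T\subset \Nc\setminus U$, the derivative distortion of $g$ on $T$ is uniformly bounded, and this translates directly into a cross-ratio bound of the required form, producing a $\rho_2$.

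The remaining case is that $T$ is not contained in any component of $U'$ but still meets $U$; then $T$ must contain at least one full connected component of $U'\setminus U$, so $|T|$ is bounded below by a positive constant depending only on $U$ and $U'$, and since $g|_T$ is a diffeomorphism and $\Fc$ is precompact in $C^1$, $|g(T)|$ is also bounded below. A compactness argument then gives $\rho_3$: if the bound failed, I would extract sequences $g_n\in\Fc$ and $J_n\subset T_n$ with $D(g_n(T_n),g_n(J_n))\to 0$ but $D(T_n,J_n)\ge \epsilon_0$, pass to subsequences with $g_n\to g_\infty$ in $C^1$ and with $T_n,J_n$ converging in the Hausdorff sense, and observe that $g_\infty|_{T_\infty}$ is a monotone $C^1$ limit of diffeomorphisms of intervals of positive length, whereupon continuity of the cross-ratio forces $D(T_\infty,J_\infty)=0$, contradicting the lower bound. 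Taking $\rho=\max(\rho_1,\rho_2,\rho_3)$ completes the proof.

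The hard part will be Case~3, where one must check that the compactness argument does not get stuck on a degenerate limit configuration in which one of the complementary intervals $L_\infty$ or $R_\infty$ collapses; fortunately such a collapse would force $D(g_\infty(T_\infty),g_\infty(J_\infty))=\infty$ rather than $0$, so it is automatically excluded by the hypothesis $D(g_n(T_n),g_n(J_n))\to 0$, and the argument closes cleanly.
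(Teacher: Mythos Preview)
Your proposal is correct and follows exactly the approach the paper intends: the paper's entire proof of Proposition~\ref{co-poly-cr} is the phrase ``Similarly we can get'', pointing back to the proof of Proposition~\ref{co-poly-pd}, and you have faithfully transcribed that template with Lemma~\ref{lm-poly-cr} substituted for Lemma~\ref{lm-poly-pb}. Your added detail on the Case~3 compactness argument (including the observation that a collapsed $L_\infty$ or $R_\infty$ would push the image cross-ratio to $+\infty$ rather than $0$) is a welcome elaboration of what the paper leaves implicit.
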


\begin{theorem} \label{thr:pb}
  Let $f$ be a $C^\infty(\Nc)$ map with all critical points non-flat.
  There exists a neighbourhood $\Fc$ of $f$ in $C^\infty(\Nc)$ and a
  function $\rho(\epsilon,N)$ such that the following holds.

  Let $g$ be in $\Fc$, $J\subset T$ be intervals such that $g^m|_T$ is a
  diffeomorphism and the intersection multiplicity of the intervals
  $g^k(T)$, $k=0,\ldots,m-1$, is bounded by $N$. Then
  $$
  D(T,J)<\rho (D(g^m(T),g^m(J)),N).
  $$
  Moreover, $\rho (\epsilon, N)$ tends to zero when $\epsilon$ goes
  to zero and $N$ is fixed.
\end{theorem}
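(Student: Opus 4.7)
The plan is to split the orbit $\{g^k(T)\}_{k=0}^{m-1}$ into ``good'' blocks on which the first bullet of Proposition~\ref{co-excep} applies, together with a uniformly bounded number of isolated ``bad'' indices where the smallness hypotheses fail, handled via the one-step pullback bound of Proposition~\ref{co-poly-cr}. Pulling back the cross-ratio from $g^m(T)$ to $T$ through this decomposition will yield the desired function $\rho(\epsilon,N)$.

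Fix any $\eta \in (0,1)$, say $\eta = 1/2$, and apply the first bullet of Proposition~\ref{co-excep} with $\epsilon := \eta$ to obtain a neighbourhood $\Fc_1$ of $f$, an integer $d_f$, and a constant $\kappa > 0$ such that, for each $g \in \Fc_1$, there exist at most $d_f$ critical intervals $E_j$ with the property that any sub-chain all of whose members satisfy $|g^k(T)| < \kappa$ and $|g^k(T) \cap E_j| < \kappa|E_j|$ admits a cross-ratio estimate $>1-\eta$. Call an index $k \in \{0,\ldots,m-1\}$ \emph{bad} if $g^k(T)$ violates at least one of these smallness conditions. From the intersection multiplicity hypothesis we get the trivial bounds $\sum_k |g^k(T)| \leq N|\Nc|$ and $\sum_k |g^k(T)\cap E_j| \leq N|E_j|$, which together bound the number of bad indices by
$$
K_0 := \frac{N|\Nc|}{\kappa} + \frac{Nd_f}{\kappa},
$$
a quantity independent of $g \in \Fc_1$ and of $m$.

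Let $\rho_1$ and $\Fc_2$ be the function and neighbourhood supplied by Proposition~\ref{co-poly-cr}, and set $\Fc := \Fc_1 \cap \Fc_2$. Enumerate the bad indices as $b_1 < \cdots < b_K$ with $K \leq K_0$, and set $b_0 := -1$ and $b_{K+1} := m$. On each good block of indices $\{b_i+1,\ldots,b_{i+1}-1\}$ (possibly empty) the hypotheses of Proposition~\ref{co-excep} are in force, giving
$$
D(g^{b_i+1}(T), g^{b_i+1}(J)) < (1-\eta)^{-1}\, D(g^{b_{i+1}}(T), g^{b_{i+1}}(J)),
$$
while at each bad step Proposition~\ref{co-poly-cr} yields
$$
D(g^{b_i}(T), g^{b_i}(J)) < \rho_1\bigl(D(g^{b_i+1}(T), g^{b_i+1}(J))\bigr).
$$
Composing these inequalities backwards from $g^m(T)$ to $T$ produces $D(T,J) < F_K\bigl(D(g^m(T), g^m(J))\bigr)$, where $F_K$ is an alternating composition of at most $K_0$ applications of $\rho_1$ with multiplications by the constant $(1-\eta)^{-1}$. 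Since $\rho_1(\epsilon) \to 0$ as $\epsilon \to 0$ and $K_0$ is a fixed finite number depending only on $f$, $N$, and $\kappa$, the function $\rho(\epsilon,N) := \max_{K \leq K_0} F_K(\epsilon)$ tends to zero as $\epsilon \to 0$, as required.

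The only delicate point is guaranteeing that $K_0$ is genuinely uniform over $g \in \Fc$: this relies on the uniformity of $\kappa$ and the uniform cap $d_g \leq d_f$ on the number of critical intervals provided by Proposition~\ref{co-excep}, and does not require any further information about how the critical intervals depend on $g$. Once this uniformity is accepted, the proof is essentially a telescoping combination of the two cross-ratio estimates established earlier.
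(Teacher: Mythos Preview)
Your proof is correct and follows essentially the same approach as the paper's: identify the ``bad'' indices where the smallness hypotheses of Proposition~\ref{co-excep} fail, bound their number uniformly via the intersection-multiplicity assumption, and then telescope, alternating the factor $(1-\eta)^{-1}=2$ from Proposition~\ref{co-excep} on good blocks with the one-step function $\rho_1$ from Proposition~\ref{co-poly-cr} at each bad index. The paper states the bound on $m'$ as ``clearly bounded by a constant depending only on $\kappa$, $d$, $N$'' without computing it; your explicit bound $K_0 = N|\Nc|/\kappa + Nd_f/\kappa$ via $\sum_k |g^k(T)| \le N|\Nc|$ and $\sum_k |g^k(T)\cap E_j| \le N|E_j|$ is exactly the intended argument.
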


\begin{proof}
  Let $\Fc$ be so small that Proposition~\ref{co-excep} holds with
  $\epsilon = \frac 12$, and Proposition~\ref{co-poly-cr} holds as well. Let
  $\kappa$ be the constant given by the first proposition and $\rho$ is a
  function given by the second one.
  Fix $g \in \Fc$ and let $E_1,\ldots,E_d$ be the corresponding
  critical intervals.

  Let $k_1<\ldots<k_{m'}$ be all indexes such that for every $k_i$
  either $|g^{k_i}(T)|\ge \kappa$ or there is $j$ such that
  $|g^{k_i}(T)\cap E_j|\ge\kappa |E_j|$.  If $k\neq k_{i}$, then
  $|g^k(T)|<\kappa$ and $|g^k(T)\cap E_j|<\kappa |E_j|$ for all
  $j=1,\ldots,d$, and the first part of Proposition~\ref{co-excep} can be applied to
  such intervals. Clearly, the number $m'$ of these indexes is bounded
  above by some constant which depends only on $\kappa$, the number of
  critical intervals $d$ and the intersection multiplicity $N$ (and
  independent of the choice of $g$).

  Due to Proposition~\ref{co-excep} we have
  $$
  D(g^{k_{m'}+1}(T),g^{k_{m'}+1}(J)) < 2 D(g^m(T),g^m(J)).
  $$
  Now we can apply Proposition~\ref{co-poly-cr} to the map $g:
  g^{k_{m'}}(T) \to g^{k_{m'}+1}(T)$ and get
  $$
  D(g^{k_{m'}}(T),g^{k_{m'}}(J)) < \rho(2 D(g^m(T),g^m(J)))
  $$
  Denote $\psi(D)=\rho(2D)$. Then we get
  $$
  D(T,J) < 2 \psi^{m'}(D(g^m(T),g^m(J)))
  $$
  and since $m'$ is uniformly bounded the theorem is proved.
\end{proof}

\begin{theorem} 
  \label{thr:cr}
  Let $f$ be a $C^\infty(\Nc)$ map with all critical points non-flat.
  There exists a neighbourhood $\Fc$ of $f$ in $C^\infty(\Nc)$ and a function
  $\rho(\epsilon,N)$ such that the following holds.

  Let $g$ be in $\Fc$, $\{J_k\}_{k=0}^m$ and $\{T_k\}_{k=0}^m$ be chains such that 
  $J_k\subset T_k$ for all $0\leq k\leq m$.
  Assume that the intersection multiplicity of $\{T_k\}_{k=0}^s$ is at most $N$ and 
  that $T_m$ contains an $\epsilon$-scaled neighbourhood of
  $J_m$. Then $T_0$ contains $\rho(\epsilon,N)$-scaled neighbourhood
  of $J_0$.

  Moreover, $\rho (\epsilon, N)$ tends to infinity when $\epsilon$ goes
  to infinity and $N$ is fixed.
\end{theorem}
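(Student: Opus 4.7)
The plan is to deduce Theorem~\ref{thr:cr} from Theorem~\ref{thr:pb} and Proposition~\ref{co-poly-pd}, tied together by the elementary equivalence between ``large scaled neighbourhood'' and ``small cross-ratio''. Writing $\alpha=|L|/|J|$, $\beta=|R|/|J|$ for the lengths of the components $L,R$ of $T\setminus J$ relative to $J$, one has
$$
D(T,J)=\frac{1+\alpha+\beta}{\alpha\beta}\ \ge\ \max\{1/\alpha,\,1/\beta\},
$$
so $D(T,J)\le C$ forces $\alpha,\beta\ge 1/C$; conversely $\alpha,\beta\ge\epsilon$ yields $D(T,J)\le(1+2\epsilon)/\epsilon^{2}$. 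Thus scaled-neighbourhood bounds and cross-ratio bounds are interchangeable (up to reciprocals) in the limit $\epsilon\to\infty\leftrightarrow D\to 0$.

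Next, I partition the chain at the critical indices. Let $0\le i_{1}<\dots<i_{q}<m$ be the indices $k$ with $T_{k}$ containing a critical point of $g$, and set $i_{0}=-1$, $i_{q+1}=m$. The intersection-multiplicity hypothesis shows that at most $N$ intervals of the chain pass through any given critical point of $g$; and the polynomial decomposition $g|_{U}=\psi\circ P\circ\phi$ used in the proof of Proposition~\ref{co-excep} (via \cite{Sergeraert1976}) bounds the total number of critical points of any $g\in\Fc$ by the total multiplicity $d_{f}$ of the critical set of $f$. Hence $q\le Nd_{f}$, uniformly in $g$ and in $m$. On each diffeomorphic block $T_{i_{\ell}+1},\dots,T_{i_{\ell+1}}$ the iterate $g^{i_{\ell+1}-i_{\ell}-1}$ is a diffeomorphism on $T_{i_{\ell}+1}$, and the sub-chain still has intersection multiplicity at most $N$, so Theorem~\ref{thr:pb} applies; at each critical step $T_{i_{\ell}}\to T_{i_{\ell}+1}$ Proposition~\ref{co-poly-pd}, which has no restriction on the size of the intervals, pulls back a scaled neighbourhood across a single critical iterate.

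Assembling the estimates, start from $D(T_{m},J_{m})\le(1+2\epsilon)/\epsilon^{2}$, apply Theorem~\ref{thr:pb} on the last diffeomorphic block to reach a cross-ratio bound at $T_{i_{q}+1}$, convert it to a scaled-neighbourhood bound via $\alpha,\beta\ge 1/D$, cross the critical step by Proposition~\ref{co-poly-pd} into a scaled-neighbourhood bound at $T_{i_{q}}$, convert back to a cross-ratio bound via $(1+2\delta)/\delta^{2}$, and iterate. After $q+1$ applications of the pullback function $\rho_{\mathrm{pb}}$ from Theorem~\ref{thr:pb} and $q$ applications of the one-step function $\rho_{\mathrm{poly}}$ from Proposition~\ref{co-poly-pd}, interleaved with the two inversions above, one arrives at $D(T_{0},J_{0})\le D_{\mathrm{final}}(\epsilon,N)$ with $D_{\mathrm{final}}\to 0$ as $\epsilon\to\infty$ and $N$ fixed; setting $\rho(\epsilon,N)=1/D_{\mathrm{final}}(\epsilon,N)$ yields the desired scaled neighbourhood of $J_{0}$ inside $T_{0}$. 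The main obstacle is the uniformity of $q$ over $\Fc$: without the polynomial-decomposition bound on the number of critical points of $g$, the composition defining $\rho$ would have a variable number of factors and the argument would collapse. It is precisely this bound, already built into the choice of $\Fc$ for Propositions~\ref{co-excep} and~\ref{co-poly-pd}, that makes $\rho(\epsilon,N)$ depend only on $\epsilon$ and $N$.
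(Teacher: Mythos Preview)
Your argument is correct and is exactly the approach the paper takes: partition the chain at the finitely many indices where $T_k$ contains a critical point, bound their number by $N$ times the (uniformly bounded) number of critical points of $g$, apply Theorem~\ref{thr:pb} on each diffeomorphic block, and cross each critical step by Proposition~\ref{co-poly-pd}. The only difference is that you spell out the elementary back-and-forth conversion between cross-ratio bounds and scaled-neighbourhood bounds, which the paper leaves implicit.
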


\begin{proof}
  This time let $k_1<\cdots<k_{m'}$ be all indexes such that the
  interval $T_{k_i}$ contains at least one critical point. If $\Fc$ is
  small, the number of critical points of a map $g\in \Fc$ is uniformly bounded,
  and since the intersection multiplicity of the intervals
  $\{T_k\}_{k=0}^m$ is bounded by $N$, the number $m'$ is uniformly
  bounded as well.

  Now, we can apply the previous theorem to maps $g^{k_{i+1}-k_i+1}:
  g(T_{k_i}) \to T_{k_{i+1}}$ and  Proposition~\ref{co-poly-pd} to
  maps $g: T_{k_i} \to T_{k_{i+1}}$ and finish the proof.
\end{proof}

\section{Proof of Theorem \ref{thr:B}}
\label{sec:maps-with-degenerate}

The proof of this theorem uses the same ideas as in \cite{M-dM-S} or \cite{Melo1995}
but we will need to tweak that proof quite a bit. We will also follow
the notation in the book \cite{Melo1995} where possible.

We start the proof by making a few trivial observations. If $f$ is a
diffeomorphism of a circle, then the neighbourhood $\Fc$ of $f$ can be
taken so it consists only of diffeomorphisms. In this case the theorem
trivially holds as all periodic orbits of a circle diffeomorphism form
one pack.

If $\Nc$ is an interval, we can enlarge it and set $\tilde \Nc=3\Nc$. We can
also extend the map $f$ in the smooth way to a map $\tilde \Nc \to
\tilde \Nc$ so that no extra critical points are created. If $\Nc$ is a
circle, we set $\tilde \Nc=\Nc$.

Take $\Fc$ such that Propositions~\ref{co-excep}, \ref{co-poly-pd},
\ref{co-poly-cr} and Theorems \ref{thr:pb}, \ref{thr:cr} hold. Fix
some small $\kappa>0$.

If $\Fc$ is small enough, the number of critical points of maps in $\Fc$
is uniformly bounded. Hence, there can be only uniformly bounded
number of periodic orbits of $g\in \Fc$ which contain critical points in
their basins of attraction.

Fix a map $g : \tilde \Nc \to \tilde \Nc$ which is $C^\infty$ close to $f$
and let $E_j$, $j=1,\ldots,d_g$ be the critical intervals of $g$ given
by Proposition~\ref{co-excep}.  Let $\Or \subset \Nc$ be a periodic
orbit of $g$. Denote the orientation preserving period of $\Or$ by
$n$.

Let $p\in \Nc$ be a point of $\Or$ and define $T_p \subset \tilde \Nc$ be
a maximal interval containing $p$
such that each component of $T_p\setminus p$ contains at most one
point of $\Or$. Thus, the closure of $T_p$ contains five points of $\Or$
if $p$ is not one of the four points closest to the boundary of $\Nc$.

Now fix point $p \in \Or$ such that the corresponding interval $T_p$
has minimal length. Set $U_n=3T_p$.  Obviously, the interval $U_n$ is
a subset of $\tilde \Nc$ and the closure of $U_n$ can contain at most 13
points of the orbit $\Or$ while $U_n$ itself contains at most 11 points of
$\Or$ in its interior. Let $\{\hat U_k\}_{k=0}^n$ be a chain such that
$g^k(p) \in \hat U_k$ for all $k=0,\ldots, n$ and $\hat U_n=U_n$.

\begin{lemma}
  The intersection multiplicity of the chain  $\{\hat U_k\}_{k=0}^n$
  is bounded by 44.
\end{lemma}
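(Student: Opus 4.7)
The plan is to fix an arbitrary point $x\in\tilde\Nc$ and bound the number $s$ of indices $k$ with $x\in\hat U_k$; writing these indices as $k_1<\cdots<k_s$, the goal becomes $s\le 44$. The preliminary estimate, which is really the heart of the argument, is that each $\hat U_{k_i}$ contains at most $11$ points of $\Or$. This follows from the chain condition: $g^{n-k_i}$ maps $\hat U_{k_i}$ continuously onto $U_n = 3T_p$, and because $\Or$ is a finite $g$-invariant set on which $g$ acts as a bijection, the restriction $g^{n-k_i}\colon \hat U_{k_i}\cap\Or \to U_n\cap\Or$ is injective. Since the intervals $\hat U_k$ are open (they are connected components of preimages of the open set $U_n$), only the $11$ interior orbit points of $U_n$ contribute, giving $|\hat U_{k_i}\cap\Or|\le 11$.

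Next, each $\hat U_{k_i}$ is an interval containing both $x$ and the orbit point $g^{k_i}(p)$, and therefore contains the whole sub-interval with these endpoints. Enumerate the points of $\Or$ along $\tilde\Nc$ (linearly if $\tilde\Nc$ is an interval, cyclically if it is a circle) as $q_1,q_2,\ldots$, let $j_0$ be the index of the orbit point immediately to one chosen side of $x$, and write $g^{k_i}(p) = q_{\sigma(i)}$. The preliminary estimate then forces $|\sigma(i) - j_0| \le 11$, so $\sigma(i)$ takes values in a set of at most $22$ consecutive indices.

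Finally, for each fixed orbit point $q\in\Or$, the equation $g^k(p)=q$ has at most two solutions $k\in\{0,\ldots,n-1\}$: exactly one when $n$ equals the actual period of $p$, and exactly two when $n$ equals twice the actual period (the orientation-reversing case that doubles the orientation-preserving period). Combining this two-to-one count with the bound on the range of $\sigma$ yields $s \le 2\cdot 22 = 44$, as desired. The only genuinely delicate point is the interior-versus-closure bookkeeping in the preliminary estimate; using that $\hat U_{k_i}$ is open and invoking the bijectivity of $g$ on the finite set $\Or$ turns what might look like a combinatorial nuisance into a one-line injection argument, and is precisely why the constant comes out as $44 = 2\cdot 2\cdot 11$ rather than something somewhat larger.
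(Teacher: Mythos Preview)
Your proof is correct and follows essentially the same route as the paper: bound the number of orbit points in each $\hat U_k$ by $11$ via the chain condition, deduce that $g^{k_i}(p)$ lies among at most $22$ orbit points adjacent to $x$, and then multiply by $2$ to account for the orientation-preserving period possibly being twice the actual period. The paper's own proof leaves this last factor of $2$ implicit, so your version is in fact a bit more explicit about where the constant $44 = 2\cdot 22$ comes from.
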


This is almost the same as Lemma 10.3(i) in \cite{Melo1995}, p. 323,
where it is formulated for diffeomorphic pullbacks instead of the
chains. The proof is the same though.

\begin{proof}
  The interval $\hat U_n$ contains at most 11 points of the orbit
  $\Or$, hence $\hat U_k$ can contain at most 11 points of $\Or$ as
  well. Thus if an interval $\hat U_i$ contains a point $x$, there
  exist at most 10 points of $\Or$ between $g^i(p)$ and $x$.

  Suppose $x \in \hat U_{k_1} \cap \cdots \cap \hat U_{k_m}$ with
  $0\le k_1 <\cdots< k_m\le n$. Arguing as in the previous paragraph we
  see that $g^{k_i}(p)$ can be one of 22 points of $\Or$ around
  $x$. 
\end{proof}

By $U^l_n$ and $U^r_n$ we will denote the left
and right components of $U_n \setminus p$ and by $\{\hat U_k^l\}_{k=0}^n$ and
$\{\hat U_k^r\}_{k=0}^n$ the corresponding chains. Notice that the
point $g^k(p)$ is always a boundary point of the intervals $\hat
U_k^l$ and $\hat U_k^r$.

Let us inductively define intervals $U_k^r$, $k=0,\ldots,n$ by the
following rule. $U_k^r$ is the maximal interval containing $g^k(p)$ as
its boundary point and satisfying the following conditions:
\begin{itemize}
\item $g(U_k^r) \subset U_{k+1}^r$;
\item $g|_{U_k^r}$ is a diffeomorphism;
\item $|U_k^r|\le \kappa/2$;
\item if $g^k(p) \in 2E_j$ for some $j$, then $|U_k^r|\le  \kappa |E_j|/2$;
\item if $g^k(p) \not\in 2E_j$, then $U_k^r$ is disjoint from $E_j$.
\end{itemize}
Notice that $|U_k^r \cap E_j| \le \kappa |E_j|/2$ for all $k$ and $j$.

We will call $k$ a \emph{cutting time} if $g(U_k^r) \neq U_{k+1}^r$. The
cutting times can be one the following types:
\begin{itemize}
\item a \emph{critical cutting time} if $U_k^r$ contains a critical
  point of $g$ in its boundary;
\item an \emph{internal cutting time} if $|U_k^r|=\kappa/2$ or there
  exists a critical interval $E_j$ such that $g^k(p)\in 2E_j$ and
  $|U_k^r|=\kappa |E_j|/2$;
\item a \emph{boundary cutting time} if there exists a critical
  interval such that  $g^k(p)\not\in 2E_j$ and $U_k^r$ contains a
  boundary point of $E_j$ in its boundary.
\end{itemize}

Since the number of critical points and critical intervals of maps in
$\Fc$ is uniformly bounded and the intersection multiplicity of
$\{U_k^r\}_{k=1}^n$ is universally bounded, the number of critical and
boundary cutting times is uniformly bounded.

The intervals $U_k^l$ are defined in the same way. We also set
$U_k=U_k^l\cup U_k^r$.

By the definition of the intervals $U_k$ it follows that $g^n|_{U_0}$
is a diffeomorphism.  A simple argument shows that $U_0\subset T_p$,
see \cite{Melo1995}, Lemma 10.2, p. 322.

Now consider two cases. First, suppose that $g^n(U_0^r)$ is strictly
contained in $U_0^r$. In this case all periodic points in $U_0^r$
belong to the same pack and if a point in this interval is not
periodic, then it is in the attraction basin of one of the attracting
points of the pack.  Let $k_1$ be the minimal cutting time in
$\{U_k^r\}_{k=1}^n$. If $k_1$ is a critical time, then one of iterates of a
critical point is in $U_0^r$ and, therefore this critical point is in
the attraction basin of some periodic point in the pack. Since the
number of critical points of maps in $\Fc$ is uniformly bounded and
the same critical point cannot be in the attraction basins of two
different orbits, the number of such packs is uniformly bounded.
Similarly, if $k_1$ is a boundary cutting time, then a boundary point
of one of the critical intervals is in the attraction basin of a point
from the pack and the number of critical intervals is also uniformly
bounded. Now consider the case when $k_1$ is internal cutting time and
suppose that it corresponds to the critical interval $E_j$, i.e.
$g^{k_1}(p) \in 2E_j$ and $|U_{k_1}^r|=\kappa |E_j|/2$. Let $p'$ be
another periodic point and suppose that if we perform a similar
construction for $p'$, we get the first cutting time $k_1'$ internal
and $g^{n'}(U_0^{r'}) \subset U_0^{r'}$ where $U_{k}^{r'}$ are the
corresponding intervals.  Since every point in $U_k^{r}$ as well as in
$U_{k_1'}^{r'}$ is either periodic or its iterates are attracted to a
periodic orbit it follows that if the closures of these two intervals
have non empty intersection, then the points $p$ and $p'$ belong to
the same pack of periodic points. The interval $U_{k_1}^r$ has length
$\kappa |E_j|/2$, there are at most $2/\kappa + 1$ disjoint intervals
like this. If we take into account all critical intervals, then we see
that there can exist at most $(1+d_f)(1+2/\kappa)$ packs of periodic
orbits in this case.

Let us summarise. All periodic points $p$ such that $g^n(U^r_0)
\subset U^r_0$ belong to finite number of packs of periodic
orbits. The number of these packs is bounded by some constant which
depends on $\kappa$, $d_f$ and the number of critical points of maps
in $\Fc$ and does not depend on the choice of $g\in \Fc$.

From now on we will assume that $U_0^r \subset g^n(U_0^r)$.

Let $U_{-k}^r$ denote the diffeomorphic pullback of $U_0^r$ along the
orbit of $p$, $g^k(U_{-k}^r)=U_0^r$.

\begin{lemma}\label{lm:2pp}
  If the interval $U_{-n}^r$ contains another periodic point $p'$ with
  order preserving period $n' \le n$, then the periodic points $p$ and
  $p'$ belong to the same pack of periodic orbits.
\end{lemma}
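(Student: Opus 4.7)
The plan is to exhibit a closed periodic interval of period dividing $n$ that contains both $p$ and $p'$; by definition this places them in a common pack of periodic orbits, proving the lemma.

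The first observation to set up is that $g^n|_{U_0^r}$ is an orientation-preserving diffeomorphism fixing the boundary point $p$. It is a diffeomorphism by the inductive construction of the chain $\{U_k^r\}$ (each $g|_{U_k^r}$ is a diffeomorphism onto a subset of $U_{k+1}^r$), so $Dg^n$ is nonvanishing on $U_0^r$. The definition of the orientation-preserving period $n$ of $p$ gives $Dg^n(p)\ge 0$, and combined with nonvanishing this forces $Dg^n>0$ throughout $U_0^r$. The hypothesis $U_0^r\subset g^n(U_0^r)$ then translates to the inclusion $U_{-n}^r=(g^n|_{U_0^r})^{-1}(U_0^r)\subset U_0^r$, so that $g^{-n}|_{U_0^r}$ is an orientation-preserving monotone self-map of $U_0^r$ fixing $p$.

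The crux of the argument is then to show that $g^n(p')=p'$. Since $n'$ is the orientation-preserving period of $p'$, one has $g^{n'}(p')=p'$, hence $g^{nn'}(p')=p'$; this means $p'$ is periodic of period dividing $n'$ under $g^{-n}|_{U_0^r}$, and its entire orbit under this map remains in $U_0^r$ by the self-map property. A standard monotonicity argument then forces $p'$ to be a fixed point: if $g^{-n}(p')\ne p'$, say $g^{-n}(p')>p'$, orientation preservation propagates the inequality to give $g^{-2n}(p')>g^{-n}(p')>p'$, and inductively the orbit is strictly monotone, contradicting its periodicity. Hence $g^{-n}(p')=p'$, that is, $g^n(p')=p'$.

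With both $p$ and $p'$ fixed by the orientation-preserving diffeomorphism $g^n|_{U_0^r}$, the closed subinterval $[p,p']\subset U_0^r$ is mapped bijectively onto itself by $g^n$, so it is a closed periodic interval of period dividing $n$ that contains both periodic points. Therefore $p$ and $p'$ belong to the same pack, as required. The only genuinely delicate moment is the orientation bookkeeping that promotes $Dg^n(p)\ge 0$ to $Dg^n>0$ on all of $U_0^r$; once this is in place, the monotone-dynamics step and the construction of the periodic interval are immediate.
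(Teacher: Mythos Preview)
Your approach is appealing and would be cleaner than the paper's, but there is a genuine gap at the step ``$g^{nn'}(p')=p'$; this means $p'$ is periodic of period dividing $n'$ under $g^{-n}|_{U_0^r}$''. Write $h=(g^n|_{U_0^r})^{-1}$ for the branch you use. You correctly have $(g^n)^{n'}\circ h^{n'}=\mathrm{id}$ on $U_0^r$, so $g^{nn'}(h^{n'}(p'))=p'$, and also $g^{nn'}(p')=p'$. To conclude $h^{n'}(p')=p'$ you would need $g^{nn'}$ to be injective on $U_0^r$; but only $g^n|_{U_0^r}$ is known to be a diffeomorphism, and since we are in the case $g^n(U_0^r)\supsetneq U_0^r$, the second and later applications of $g^n$ act on a strictly larger interval on which nothing prevents critical points. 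Equivalently, $h^{n'}(p')=p'$ holds if and only if the forward iterates $g^{jn}(p')$ lie in $U_0^r$ for every $j=1,\ldots,n'$, and the hypothesis $p'\in U_{-n}^r$ gives this only for $j=1$. Without $h$-periodicity of $p'$ the monotone-dynamics step has nothing to contradict, and indeed nothing in your argument excludes $h^k(p')$ being a strictly monotone sequence converging to some fixed point of $h$ other than $p'$.

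The paper sidesteps this issue by a completely different, combinatorial route: it never tries to prove $g^n(p')=p'$. Instead it uses that $U_0^r\subset T_p$ contains at most one interior point $q$ of the orbit $\Or$, locates $p'$ between two consecutive points $q,q'\in\Or$, and analyses where $g^{n'}(q)$ can land. Either $g^{n'}(q)\in\{q,q'\}$, which directly produces a periodic interval with endpoints in $\Or\cup\{p'\}$, or one forces an extra point of $\Or$ into $(q,g^n(p'))$, contradicting the count of $\Or$-points in $U_0^r$. This argument only ever uses the single diffeomorphic iterate $g^n|_{U_0^r}$.
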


\begin{proof}
  We know that the interval $U_0^r$ is subset of $T_p$, so $U_0^r$
  contains at most one point of $\Or$ in its interior. Let $q$ be this
  point if it exists, otherwise let $q=p$. If $q=p'$, we are done, so
  assume $q\neq p'$.

  Since $p'\in U_{-n}^r$ we get $g^{n}(p') \in U_0^r$ and, therefore,
  there are no periodic points from $\Or$ in the interval
  $(q,g^n(p'))$. Let $q'\in \Or$ be another periodic point from the
  orbit $\Or$ such that $p' \in (q,q')$ and the open interval $(q,q')$
  does not contain any points of $\Or$.

  If $g^{n'}(q)=q$ or $g^{n'}(q)=q'$, then the interval $[q,p']$ or
  $[p',q']$ is periodic and the points $q$ and $p'$ belong to the same
  pack of periodic orbits. Otherwise, the interval $(g^{n'}(q),
  g^{n'}(p'))=(g^{n'}(q), p')$ contains a point from the orbit $\Or$,
  and therefore, the interval $(q,g^n(p'))$ will contain a point from
  $\Or$ as $g^n:(q,p') \to (q,g^n(p'))$ is a diffeomorphism. This is a
  contradiction.
\end{proof}

\begin{proposition}
  There exist constants $\rho>0$ and $\kappa_0>0$ such that for any
  $\kappa \in (0,\kappa_0)$ there exists $M \in \N$ such that the following
  holds.

  For every $g\in \Fc$ there are at most $M$ exceptional packs of periodic orbits of
  $g$ such that if $\Or$ is not in one of the exceptional packs, then
  there is a point $\theta^r \in U_0^r$ such that 
  $$
  Dg^n(\theta^{r}) > 1+2\rho.
  $$
\end{proposition}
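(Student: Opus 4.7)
The plan is to organize periodic orbits by the types of cutting times appearing in the forward sequence $\{U_k^r\}_{k=0}^{n-1}$ and to extract the derivative bound from the cross-ratio machinery of Section~\ref{sec:cross-ratio-estim} combined with the pullback estimate of Theorem~\ref{thr:cr}. First I would declare exceptional any pack that contains an orbit whose forward sequence admits a critical or a boundary cutting time: each such cut is witnessed by either a critical point of $g$ or a boundary point of some critical interval $E_j$ lying in the boundary of the corresponding $U_k^r$. Since the number of critical points of $g\in\Fc$ is uniformly bounded, there are at most $d_f$ critical intervals, and the intersection multiplicity of the chain $\{\hat U_k\}_{k=0}^n$ is bounded by $44$, a combinatorial argument (using Lemma~\ref{lm:2pp} to identify orbits within a common pack) yields at most $M_1 = M_1(d_f)$ exceptional packs of this type.

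For the remaining orbits all cutting times are internal, so at every cut $k$ the interval $U_k^r$ has size $\kappa/2$ or $\kappa|E_j|/2$. Using Proposition~\ref{co-excep} part~1 applied to the forward iterates $g^k(U_0^r)\subset U_k^r$---which all have size at most $\kappa$ and form a chain of intersection multiplicity at most $44$---I would show that $|g^n(U_0^r)|/|U_0^r|\geq 1+2\epsilon_0$ for a uniform $\epsilon_0>0$, exploiting either the definite size of the first internal cutting interval $U_{k_1}^r$ or, when no cutting occurs at all, the fact that $U_0^r$ is the full backward pullback of $U_n^r$ (so that $g^n(U_0^r)=U_n^r$ and the strict inclusion $U_0^r\subsetneq g^n(U_0^r)$ combined with bounded distortion supplies a uniform ratio). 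Next I would apply Theorem~\ref{thr:cr} to the chain $\{g^k(U_0^r)\}_{k=0}^n$ to transfer the forward scaled-neighbourhood estimate into the backward one: $U_0^r$ contains a $\rho'$-scaled neighbourhood of $U_{-n}^r:=g^{-n}(U_0^r)\cap[p,\infty)$ for some uniform $\rho' > 0$. The mean value theorem applied to $g^n:U_{-n}^r\to U_0^r$ then produces a point $\theta^r\in U_{-n}^r\subset U_0^r$ satisfying
$$Dg^n(\theta^r) = \frac{|U_0^r|}{|U_{-n}^r|} > 1+2\rho.$$

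The main obstacle is proving the uniform forward expansion $|g^n(U_0^r)|/|U_0^r|\geq 1+2\epsilon_0$ for non-exceptional orbits. The definite size gained at an internal cut $U_{k_1}^r$ could in principle be cancelled by forward distortion between $k_1$ and $n$; controlling this cancellation requires using Proposition~\ref{co-excep} part~1 to keep the cross-ratio distortion on the non-cutting portion close to $1$, together with an accumulation argument based on the bounded intersection multiplicity and the finite number of critical intervals to absorb the remaining problematic orbits into boundedly many further exceptional packs. Pinning down $\rho$, $\kappa_0$, and $M$ independently of $g\in\Fc$ rests entirely on this quantitative interplay.
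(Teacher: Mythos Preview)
Your dichotomy is inverted relative to the paper, and this inversion is fatal rather than merely stylistic. In the regime $U_0^r\subset g^n(U_0^r)$ (which is the setting of the proposition), the paper treats orbits whose \emph{minimal} cutting time is critical or boundary as the \emph{non}-exceptional ones: precisely there the second part of Proposition~\ref{co-excep} applies (a critical point sits in $\lambda U_m^r$, or $U_m^r\not\subset 2E_{j_0}$ while $\lambda U_m^r$ meets $E_{j_0}$), yielding a definite cross-ratio gain $B(g,U_m^r,\tfrac13 U_m^r)>1+\tau$; combined with the first part of Proposition~\ref{co-excep} on the remaining iterates and with Theorem~\ref{thr:cr} to control the relative sizes of $g^n(L),g^n(R)$, the First Expansion Principle then produces $\theta^r$ with $Dg^n(\theta^r)>1+2\rho$. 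Conversely, the exceptional packs are those whose minimal cutting time is \emph{internal}: there one builds an interval $W\subset U_m^r$ of length $\tfrac18\gamma\kappa|E_j|$ and proves (via Lemma~\ref{lm:2pp} and a case analysis) that overlapping $W$'s force the orbits into the same pack, whence a covering count bounds the number of packs by roughly $8/(\gamma\kappa)$ per critical interval.

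Your plan fails on both sides. Declaring critical/boundary cuts exceptional does not give a uniform bound on packs: in the expanding case there is no basin-of-attraction argument (that argument was used \emph{before} the proposition, in the contracting case $g^n(U_0^r)\subsetneq U_0^r$), and a single critical point $c$ can sit in $\partial U_m^r$ for unboundedly many periodic orbits in distinct packs, so Lemma~\ref{lm:2pp} gives you nothing here. On the other side, for purely internal cuts there is no mechanism for definite expansion: the point $g^m(p)$ lies in $2E_j$, where the Schwarzian can be large and positive, so Proposition~\ref{co-excep} part~2 does not apply and the cross-ratio need not increase; the ``definite size'' $|U_m^r|=\kappa|E_j|/2$ is only definite relative to $|E_j|$ and carries no expansion information. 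Your acknowledged obstacle (``could in principle be cancelled by forward distortion'') is exactly the point where the argument breaks, and absorbing the failures into further exceptional packs just reproduces the internal-cut case the paper handles directly. You never invoke Proposition~\ref{co-excep} part~2 or the First Expansion Principle, and those are the engine of the proof.
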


\begin{proof}
  We can assume that $Dg^n(x) <2$ for all $x \in U_0^r$, otherwise we
  have nothing to prove. Since $U_n=3T_p$ and $U_0 \subset T_p$ the
  closure of the interval $g^n(U_0^r)$ is contained in the interior of
  $U_n$. In particular, this implies that there exists at least one
  cutting time for $\{U_k^r\}_{k=1}^n$. Let $m$ be the minimal
  cutting time, i.e. there is no cutting time $m'$ with
  $m'<m$. Another property of the minimal cutting time is that
  $g^m(U_0^r) = U_m^r$.

  Consider several cases now. First, suppose that $m$ is critical or
  boundary cutting time. Let $M'= \frac 13 U_m^r$ and $M \subset
  U_0^r$ is a preimage of $M'$ under $g^m$. Due to the second part of
  Proposition~\ref{co-excep} we know that $B(g, U_m^r, M') > 1+\tau$
  where $\tau>0$ is a constant independent of choice of $g\in
  \Fc$. Moreover, $\tau$ does not change if we decrease $\kappa$. We
  know that $|g^k(U_0^r) \cap E_j| \le \kappa |E_j|/2$ for all
  $k=0,\ldots, n-1$ and $j$, so we can apply the first part of
  Proposition~\ref{co-excep} to maps $g^m: U_0^r \to U_m^r$ and
  $g^{n-m-1} : g(U_m^r) \to g^n(U_0^r)$. Decreasing $\kappa$ if
  necessary we can get $B(g^n, U_0^r, M) > 1+\tau/2$.

  Let $L$ and $R$ be connected components of $U_0^r\setminus M$ and
  let $L$ contain the point $p$ in its boundary. According to 
  Theorem~\ref{thr:cr} the interval $g^n(R)$ cannot be very small
  compared to the interval $g^n(U_0^r)$. Indeed, if $g^n(R)$ is small,
  then it has a huge space inside $U_n^r$, i.e. $Cg^n(R) \subset
  U_n^r$ for some large constant $C$. If we apply
  Theorem~\ref{thr:cr} to the map $g^{n-m} : g^m(R) \subset \hat U_m^r
  \to g^n(R) \subset U_n^r$, is  we can
  see that the interval $g^m(R)$ would have a big space in
  $U_m^r$. However, one of the components of $U_m^r \setminus g^m(R)$
  has length $2|g^m(R)|$, so the space is bounded. A similar argument
  holds for the interval $g^n(L)$, in this case we should consider
  $U_n^l \cup g^n(U_0^r)$ as a neighbourhood of $g^n(L)$.

  Thus, there exists a constant $\beta>0$ independent of the choice of
  $g\in \Fc$ such that
  $$
  |g^n(L)|>\beta |g^n(U_0^r)|
  \quad \mbox{and} \quad 
  |g^n(R)|>\beta |g^n(U_0^r)|.
  $$
  Since $U_0^r \subset g^n(U_0^r)$ and $Dg|_{U_0^r}<2$ it follows that
  $$
  |L|>\frac 12 \beta |U_0^r|
  \quad \mbox{and} \quad 
  |R|>\frac 12 \beta |U_0^r|.
  $$
  Now we can apply ``the First Expansion Principle'', see
  \cite{Melo1995}, Theorem 1.3, p. 280 to the map $g^n: U_0^r \to
  g^n(U_0^r)$ and get a point $\theta^r \in U_0^r$ with
  $Dg^n(\theta^r)>1+2\rho$ where $\rho$ does not depend on $g\in \Fc$.

  The remaining case we have to consider is when $m$ is the internal cutting
  time. By definition we know that in this case either
  $|U_m^r|=\kappa/2$ or there exists a
  critical interval $E_j$ such that $g^m(p) \in 2E_j$ and
  $|U_m^r|=\kappa |E_j|/2$. We will consider only the second case, the
  other one can be dealt with in the exactly same way.
  
  Consider an interval $U_{-6n}^r\subset U_0^r$. The derivative of
  $g^n$ on $U_0^r$ is bounded by 2, therefore $|U_{-6n}^r|>2^{-7}
  |g^n(U_0^r)|$ and the interval $U_0^r \setminus U_{-6n}^r$ has a definite space inside
  the interval $U_n^r$. Applying Theorem~\ref{thr:cr} to the map
  $g^{n-m}:\hat U_m^r \to U_n^r$ we get a constant $\gamma>0$ such that
  $|U_{-7n+m}^r| > \gamma |U_m^r|= \gamma \kappa |E_j|/2$. This constant
  is independent of $g\in \Fc$ and $\kappa$.

  Define an interval $W\subset U_m^r$ so that $g^m(p)$ is the boundary
  point of $W$ and $|W|=\frac 18 \gamma \kappa |E_j|$. Clearly, $g^{6n}|_W$ is
  a diffeomorphism.

  Let $\Or'$ be another periodic orbit of $g$ of period $n'$. Suppose that if for
  $\Or'$ we repeat the construction we did for $\Or$, then the
  corresponding first cutting time $m'$ is also of internal type with
  the same interval $E_j$. Let $W'$ be defined as $W$ but for the
  orbit $\Or'$.

  \underline{Claim.} If the intervals $W$ and $W'$ have a non empty
  intersection, then the orbits $\Or$ and $\Or'$ belong to the same pack.

  Without loss of generality we can assume that $n'\ge n$.  Also
  observe that the intervals $W$ and $W'$ have the same length.

  Let us consider several cases how the intervals $W$ and $W'$ can
  intersect. First, let us suppose that  $g^m(p) \in W'$. Then
  $g^{m+n'-m}(p) \in U_{-6n'}^{r'}$ and due to Lemma~\ref{lm:2pp} the
  points $p$ and $p'$ are in the same pack.

  Another case is $g^m(p) \not \in W'$ and $g^{m'}(p') \not \in W$. In
  this case it is easy to see that since $|U_{-7n'+m'}^{r'}|> 4 |W'|$ the
  interval $U_{-7n'+m'}^{r'}$ contains the point $g^m(p)$ and the same
  argument as above can be applied.

  The last case is $g^m(p) \not \in W'$ and $g^{m'}(p') \in W$. If
  $g^{m'+n}(p')=g^{m'}(p')$, then the interval $(g^m(p),g^{m'}(p'))$
  is periodic and the points $p$ and $p'$ are in the same pack. If 
  $g^{m'+n}(p')$ is in the interval $(g^m(p),g^{m'}(p'))$, then 
  $(g^m(p),g^{m'}(p'))$ is mapped into itself and iterates of the point
  $g^{m'}(p')$ are attracted to some periodic attractor which is
  impossible because $g^{m'}(p)$ is a periodic point.

  So, $g^{m'+n}(p')\not \in (g^m(p),g^{m'}(p'))$. This implies that
  $g^{m'+in}(p') \in U_m^r \setminus (g^m(p), g^{m'}(p'))$ for
  $i=1,\ldots,6$. The intervals $U_m^r$ and $U_{m'}^{r'}$ have the same
  length $\kappa |E_j|/2$, therefore, in this case 
  $$U_m^r \setminus (g^m(p), g^{m'}(p')) \subset U_{m'}^{r'}$$
  and the interval $g^{n'-m'}(U_{m'}^{r'})$ contains 6 points from the
  orbit $\Or'$ in its interior. This contradicts the fact that $U_n^r
  \supset g^{n'-m'}(U_{m'}^{r'})$ contains at most 5 points from $\Or'$.
  The claim is proved.

  Now we can finish the proof of the proposition. It follows from the
  claim that there are at most $8/(\gamma \kappa) + 1$ packs of
  periodic orbits such that a periodic point from such a pack can have
  the minimal cutting time of boundary type associated with the
  critical interval $E_j$. Since the number of the critical intervals
  is uniformly bounded the lemma follows.
\end{proof}

The theorem easily follows from this proposition.
Take $\kappa \in (0,\kappa_0)$ so small that
Proposition~\ref{co-excep} holds with
$$\epsilon=1 - \left(\frac{1+\rho}{1+2\rho}\right)^{\frac 13}$$
and $N=44$.

For this choice of $\kappa$ let $\theta^r\in U_0^r$ and $\theta^l\in U_0^l$ be
given by the proposition, so
$$
Dg^n(\theta^{r,l}) > 1+2\rho.
$$
Set $T=(\theta^l,\theta^r)$. Since $T\subset U_0$, $|g^k(U_0)|\le
\kappa$, and $|g^k(U_0) \cap E_i| \le \kappa |E_j|$ for all
$k=0,\ldots,n-1$ and $j$ and the map $g^n : T \to g^n(T)$ is a
diffeomorphism, Proposition~\ref{co-excep} can be applied to all
intervals $J^*\subset T^* \subset T$. We get
$$
B(g^n, T^*, J^*)^3 > \frac{1+\rho}{1+2\rho}.
$$
Now the ``Minimum principle'', \cite{Melo1995}, Theorem 1.1, p. 275,
can be applied and
$$
Dg^n(x)>1+\rho
$$
for all $x\in T$. In particular, $Dg^n(p)> 1+\rho$.

\section{Appendix}
\label{sec:appendix}

Following a referee suggestion we outline here proofs of
Lemma~\ref{lm:unifor} and uniform bounds on the Schwarzian derivative
used in the proof of Theorem~\ref{thr:sfn}.

\underline{Proof of Lemma~\ref{lm:unifor}} \newline
Suppose that the conclusion of the lemma is false. This means that
there exist
\begin{itemize}
\item a map $f\in C^1(\Nc)$,
\item a constant $\epsilon > 0$,
\item a sequence of maps $g_k \in C^1(\Nc)$, $k=1,2,\ldots$,
\item a sequence of intervals $I_k \subset \Nc$,
\item and a sequence of positive integers $n_k$
\end{itemize}
such that the following properties are satisfied:
\begin{enumerate}
\item $f$ does not have wandering intervals, \label{py:1}
\item $f$ does not have neutral periodic points, \label{py:2}
\item $|I_k|>\epsilon$ for all $k$,
\item $g_k \to f$ in $C^1$ norm as $k\to \infty$,
\item $n_k \to +\infty$,
\item $|g_k^{n_k}(I_k)| \to 0$, \label{py:to0}
\item $g_k^{n_k}(I_k)$ does not intersect an immediate attraction
  basis of a periodic attractor of $g_k$. \label{py:B}
\end{enumerate}

By considering a subsequence we can assume that the intervals $I_k$
converge to an interval $I_0$. This interval $I_0$ cannot be degenerate
as its length is bounded by $\epsilon$ from below. Notice that at this
point we cannot claim that $\liminf_{n \to +\infty} |f^n(I_0)| =0$.

\underline{Claim 1}
These are no periodic points of $f$ in $\interior(f^n(I_0))$,
$n=0,1,\ldots$, where $\interior$ denotes the interior of a set.

Indeed, if $a\in \interior(f^{n_0}(I_0))$ for some $n_0\ge 0$ is a
periodic point, then $a$ cannot be a neutral point of $f$ because of
Property~\ref{py:2}. Hence, under a small perturbation of $f$ the
point $a$ persists and there exists $k_0$ such that $g_k^{n_0}(I_k)$
contains a periodic point of $g_k$ for all $k\ge k_0$. If $a$ is an
attracting periodic point, then we get a contradiction with
Property~\ref{py:B}.  If $a$ is repelling, then there exists
$\epsilon_a>0$ such that $|f^n(I_0)|>\epsilon_a$ for all $n$. This
also holds for small perturbations of $f$, and it contradicts
Property~\ref{py:to0}.

Similarly one can proof

\underline{Claim 2}
Intervals $f^n(I_0)$ cannot have a non empty intersection with
immediate basins of attraction of attracting periodic points of $f$.

Let $W = \cup_{n=0}^\infty \interior(f^n(I_0))$. The set $W$ is not
necessarily forward invariant, but its closure is. Take a connected component $U$ of
$W$. If for some $m>0$ $f^m(U)\cap U \neq \emptyset$, then $f^m(U) \subset \bar U$,
where $\bar U$ is the closure of $U$.
Consider several cases:
\begin{enumerate}
\item If $U$ contains a periodic point of $f$, then one of $f^n(I_0)$
  contains a periodic point of $f$ in its interior. This contradicts
  Claim 1.
\item If $U$ is an interval and there are no periodic points of $f$ in
  $U$, then $f^m|_U$ is monotone and one of the boundary points $a$ of $U$
  is an attracting periodic  point of $f$. Moreover, the immediate
  basin of attraction of $a$ contains $U$, and therefore some
  $f^n(I_0)$ has a non empty intersection with it which is impossible
  according to Claim 2.
\item Let $U$ be a circle. In this case $\Nc=W=U$ and the map $f$ does
  not have periodic points. By compactness there are finitely many $0
  \le n_1< \ldots < n_r$ such that $\Nc=\cup_{i=1}^r
  \interior(f^{n_r}(I_0))$. It is easy to see that
  there exist $z \in \Nc$ and $l>0$ such that the points $z$ and
  $f^l(z)$ are in $\interior (f^{n_0}(I_0))$ for some $n_0$. Let $\epsilon_0 =
  \min_{x\in \Nc} |f^l(x)-x|$. Obviously $\epsilon_0>0$ as $f$ has no
  periodic points. Then $|g^l(x)-x| > \epsilon_0/2$ for all $x$ if $g$
  is sufficiently close to $f$. So, for large $k$ one has that
  $\{z,g_k^l(z)\} \subset g_k^{n_0}(I_k)$ and therefore
  $|g_k^n(I_k)|>\epsilon_0/2$ for all $n>n_0$. This contradicts
  Property~\ref{py:to0}.
\end{enumerate}

Finally, if the orbit of $U$ is disjoint, then either $U$ is a
wandering interval of $f$ or it is attracted to a periodic attractor. Both
cases are impossible because of Property~\ref{py:1} and Claim 2.
\hfill $\Box$

\vspace{2mm}

Let $c$ be a quadratic critical point of $f\in C^3(\Nc)$ and let
$B=Df^2(c)$. Fix a neighbourhood $\Fc \in C^3(\Nc)$ of $f$ and some
interval $T$ of $c$ so $f$ does not have other critical points in $T$.
We can assume that all maps in $\Fc$ have one quadratic critical point
in $T$.  If $\Fc$ and $T$ are small enough, we get $D^3g(x) Dg(x)-
\frac{3}{2}(D^2g(x))^2 < -B^2$ for all $g\in \Fc$ and $x \in T$.

Let $c_g\in T$ denote the critical point of $g\in \Fc$. Due to the
mean value theorem we get $Dg(x)=Dg(c_g)+D^2g(z)(x-c_g)$ for some $z\in
[c_g,x]$. Therefore, $|Dg(x)|<A|x-c_g|$ for some $A>0$ for all $g\in
\Fc$ and $x \in T$. Combining these inequalities we get
$$
Sg(x) =\frac{D^3g(x) Dg(x)-\frac 32(D^2g(x))^2}{(Dg(x))^2} < -\frac{B^2}{A^2|x-c_g|^2}.
$$
This is the required estimate. The rest of the proof of
Theorem~\ref{thr:sfn} literally follows the proof in
\cite{Kozlovski2000} or \cite{VanStrien2004}.

\bibliographystyle{alpha}
%\bibliography{../mendeley/library,../tesis,b}
\bibliography{b3}

\end{document}